\documentclass[12pt]{amsart}

\usepackage[T1]{fontenc}
\usepackage[utf8]{inputenc}
\usepackage{amsmath,amssymb,amsthm,mathtools}
\usepackage{tikz-cd}
\usepackage[margin=1.15in]{geometry}
\usepackage{microtype}
\usepackage{needspace}
\usepackage{xcolor}
\definecolor{linkblue}{HTML}{154C79}
\usepackage[colorlinks=true,allcolors=linkblue]{hyperref}
\hypersetup{pdftitle={Rational badness of wedges},
  pdfauthor={Nikita Golub},
  pdfsubject={Rational completion and the homology of wedges}}

\newtheorem{theorem}{Theorem}[section]
\newtheorem{proposition}[theorem]{Proposition}
\newtheorem{lemma}[theorem]{Lemma}
\newtheorem{corollary}[theorem]{Corollary}
\theoremstyle{remark}

\newtheorem{example}[theorem]{Example}
\numberwithin{equation}{section}

\newcommand{\Q}{\mathbb Q}
\newcommand{\Z}{\mathbb Z}
\newcommand{\LL}{\mathbb L}
\newcommand{\F}{\mathbb F}
\newcommand{\K}{\mathbb K}
\newcommand{\cont}{\mathfrak c}
\newcommand{\CE}{\mathcal C}
\newcommand{\real}[1]{\langle #1\rangle}
\newcommand{\comp}[1]{#1^{\wedge}_{\Q}}
\DeclareMathOperator{\ad}{ad}
\DeclareMathOperator{\MC}{MC}
\DeclareMathOperator{\Tot}{Tot}
\DeclareMathOperator{\Sym}{Sym}
\DeclareMathOperator{\coker}{coker}
\DeclareMathOperator{\Frac}{Frac}

\title{Wedges, Rational completion and \(\Q\)-bad spaces}
\author{Nikita Golub\textsuperscript{\(\dagger\)}}
\thanks{\textsuperscript{\(\dagger\)} Mathematics Program and Center for Quantum and Topological Systems
(CQTS), New York University Abu Dhabi, UAE. Email: \texttt{ngxy@nyu.edu}. Department of Mathematics and Computer Science, St. Petersburg State University, St. Peterburg, RU}
\thanks{\textit{Acknowledgements.} The author thanks Sergei Ivanov for suggesting the problem, for helpful
discussions, and for his hospitality during the author's stay at BIMSA
in China. The author also gratefully acknowledges financial support
from a scholarship in honour of V.~A.~Rokhlin.}
\date{September 11, 2026}

\begin{document}

\begin{abstract}
Let $A$ and $X$ be connected based CW complexes with $H_1(A;\Q)\neq0$,
and let $m\geq1$ be the least degree with $\widetilde H_m(X;\Q)\neq0$.
We prove that the Bousfield--Kan rational completion map of $A\vee X$
induces a map on $H_{2m}(-;\Q)$ whose cokernel has dimension at least
$2^{\aleph_0}$. In particular, $A\vee X$ is $\Q$-bad.
\end{abstract}

\maketitle

\section{Completion, bad spaces, and the main theorem}
\label{sec:result}

Let \(R\) be a commutative ring with unit.  For a pointed simplicial set
\(K\), write \(RK\) for the underlying simplicial set of the
reduced free simplicial \(R\)-module on \(K\).  The unit and multiplication
of this monad give the standard coaugmented cosimplicial resolution
\[
 K\longrightarrow RK
 \;\substack{\longrightarrow\\[-.65ex]\longrightarrow}\;
 R^2K
 \;\substack{\longrightarrow\\[-.65ex]\longrightarrow\\[-.65ex]
 \longrightarrow}\;\cdots.
\]
After a Reedy fibrant replacement, the \emph{Bousfield--Kan
\(R\)-completion} is
\begin{equation}\label{eq:BK-definition}
 K^\wedge_R=\Tot\bigl(R^{\bullet+1}K\bigr)
 \simeq\operatorname*{holim}_{[n]\in\Delta}
        R^{n+1}K,
\end{equation}
and the coaugmentation induces the completion map
\(\eta_K:K\to K^\wedge_R\); for a space one applies this construction to a
pointed singular complex \cite[Chapters~I and~III]{BK}.  The completion
detects \(R\)-homology equivalences: a map \(f\) induces an isomorphism on
reduced homology with coefficients in \(R\) if and only if
\(f^\wedge_R\) is a weak equivalence \cite{BK}.

There is also an \(\infty\)-categorical universal characterization.  Let
\(\mathcal K(R)\) be the full subcategory of spaces spanned by the empty
space and by products of Eilenberg--Mac Lane spaces \(K(A,n)\), where
\(A\) is an \(R\)-module and \(n\geq0\).  A coaugmented endofunctor is called
\(\mathcal K(R)\)-preserving when its coaugmentation is an equivalence on
every object of \(\mathcal K(R)\).  Bousfield--Kan \(R\)-completion is the
terminal such coaugmented endofunctor on the \(\infty\)-category of
spaces.  Its structure as a coaugmented functor extends, uniquely up to a
contractible space of choices, to the terminal
\(\mathcal K(R)\)-preserving \(\infty\)-monad
\cite[Theorem~7.8 and Corollary~7.9]{CodensityCompletion}.

Completion is nevertheless not a localization on all spaces.  A
localization functor is homotopy idempotent, whereas in general the natural
maps
\[
 K^\wedge_R\longrightarrow (K^\wedge_R)^\wedge_R
\]
need not be weak equivalences.  This failure is measured by goodness.

A connected space \(Y\) is \emph{\(R\)-good} if
\(\eta_Y:Y\to Y^\wedge_R\) induces an isomorphism on reduced \(R\)-homology,
and it is \emph{\(R\)-bad} otherwise.  Equivalently, completion is
idempotent at an \(R\)-good space.  On the full subcategory of \(R\)-good
spaces it agrees with localization with respect to \(R\)-homology
\cite{BousfieldLocalization}.

The real projective plane is \(\Z\)-bad \cite[Chapter~VII]{BK}; a wedge of two circles is
\(\Z\)-bad \cite{BousfieldTowers}; and, for every prime \(p\), finite
wedges of at least two circles are \(\Z/p\)-bad
\cite{BousfieldPadic}.  Rational badness is much more elusive.  For a
connected nilpotent space, Bousfield--Kan \(\Q\)-completion agrees with
ordinary rationalization \cite{BK}.

For a long time, even the elementary space \(S^1\vee S^1\) was not known to be $\Q$-bad.  If \(F_r\) is a noncyclic finitely generated
free group, then
\[
 (K(F_r,1))^\wedge_{\Q}\simeq K(\widehat F_{r,\Q},1),
\]
where \(\widehat F_{r,\Q}\) is the rational pronilpotent completion.
Since \(H_2(F_r;\Q)=0\), it is enough to find a nonzero class in
\(H_2(\widehat F_{r,\Q};\Q)\), but infinite commutator series in the
completed group need not survive group-homology boundaries.  This
question, posed by Bousfield, was answered by Ivanov and Mikhailov: they
proved that the latter homology group has uncountable dimension
\cite{BousfieldTowers,IM}.  Thus every finite wedge
of at least two circles is \(\Q\)-bad; in particular, their work supplied the
first known finite \(\Q\)-bad space \cite{IM}.

Our result gives a broad class of $\Q$-bad spaces. From now on, write
$\eta_Y:Y\to\comp Y$ for the $\Q$-completion map,
$H_*(-)=H_*(-;\Q)$, and $\cont=2^{\aleph_0}$.

\begin{theorem}\label{thm:main}
Let $X$ be a connected based CW complex with
$\widetilde H_*(X)\neq0$, and set
\[
 m=\min\{j\geq1:H_j(X)\neq0\},\qquad Y=S^1\vee X.
\]
Then
\begin{equation}\label{eq:main}
 \dim_{\Q}\coker\bigl(H_{2m}(Y)
          \xrightarrow{\,H_{2m}(\eta_Y)\,}H_{2m}(\comp{Y})\bigr)
 \geq\cont.
\end{equation}
In particular, $Y$ is $\Q$-bad and $\dim_{\Q}H_{2m}(\comp{Y})\geq\cont$.
More generally, if $X$ is simply connected and the rational Hurewicz map
\[
 \pi_n(X)\otimes\Q\longrightarrow H_n(X)
\]
is nonzero for some $n\geq2$, then
\begin{equation}\label{eq:spherical-main}
 \dim_{\Q}\coker\bigl(H_{2n}(Y)
          \xrightarrow{\,H_{2n}(\eta_Y)\,}H_{2n}(\comp{Y})\bigr)
 \geq\cont.
\end{equation}
\end{theorem}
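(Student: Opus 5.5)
We reduce to a computation in the completed group and its homology with twisted coefficients. First we reduce to a model case. Choose a rational homology class detecting degree $m$; the plan is to produce a map from a simple test space and use naturality of $\eta$. Since $\comp{(-)}$ is functorial and $S^1\vee X$ retracts onto $S^1$, we get a retraction $\comp Y\to\comp{(S^1)}\simeq K(\Q,1)$; its homotopy fiber over the basepoint is essentially $\comp{(\bigvee_{k\in\Z}X)}$, suitably completed. Concretely, the universal cover of $Y$ over the $S^1$ factor is $\bigvee_{k\in\Z}X$, and $H_m$ of it is the free $\Q[t^{\pm1}]$-module $M=H_m(X)\otimes\Q[t^{\pm1}]$. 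The expected description is as follows. Bousfield--Kan $\Q$-completion of a space with $\pi_1=\Z$ and $\Q[\Z]$-module homology should complete the module $I$-adically, where $I=(t-1)$ is the augmentation ideal. Thus the fiber of $\comp Y\to K(\Q,1)$ has bottom homology $\widehat M=H_m(X)\otimes\Q[[s]]$, with $s=t-1$ acting by the unipotent monodromy $\exp$-twisted action of $\Q$. The first step is to prove this precisely. I would use the fiberwise/nilpotent-fibration results of Bousfield--Kan, or compute $\comp Y$ via the tower of $\Q$-nilpotent quotients (Dror's lemma / Curtis convergence for the $s$-adic filtration), to identify $\pi_m(\comp Y)\otimes$-level data in degree $m$ with $\widehat M$.

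The second step is a Serre spectral sequence computation for the fibration $F\to\comp Y\to K(\Q,1)$, or more cleanly the map of fibrations comparing $Y$ with $\comp Y$. In total degree $2m$ the relevant contributions are $H_0(\Q;H_{2m}F)$ and $H_1(\Q;H_{2m-1}F)$, plus terms of the form $H_0(\Q;\Lambda^2$ or $\Sym^2\widehat M)$ inside $H_{2m}F$. For $m$ the bottom degree, $H_{2m}F$ contains the quadratic part $\Gamma_2(\widehat M)$ (symmetric or exterior square according to parity) via the product structure coming from $\widetilde H_m$ of the fiber. I would aim to show that the coinvariants of $\Lambda^2_{\Q}$ or $\Sym^2_{\Q}$ of $\widehat M$, under the unipotent $\Q$-action (i.e.\ modulo $(t\otimes t-1)$), are of dimension at least $\cont$. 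By contrast, the image from $Y$ only involves the corresponding coinvariants of $M\otimes M$, which are countable. The key algebraic lemma is modeled on Ivanov--Mikhailov. The quotient
\[
 \Q[[s_1,s_2]]\big/\bigl((1+s_1)(1+s_2)-1\bigr)\Q[[s_1,s_2]]+\bigl(\Q[s_1^{\pm}]\otimes\Q[s_2^{\pm}]\text{-image}\bigr)
\]
has dimension $\cont$. Note that $\Q[[s]]\otimes_\Q\Q[[s]]$ is not $\Q[[s_1,s_2]]$, and this is exactly where uncountability enters. I would exhibit a $\cont$-family of power series $f_\alpha(s)\otimes g_\alpha(s)$ with algebraically independent or transcendence-separated coefficients, and show they are independent modulo the relations by passing to $\Frac$ and evaluating on the diagonal after the substitution $(1+s_1)(1+s_2)=1$.

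Third, I need to ensure these classes are not killed by differentials and genuinely lie in $H_{2m}(\comp Y)$. Because $m$ is the least nonvanishing degree, the only incoming differential lands from $H_1(\Q;H_{2m+?})$ or from $E^2_{2,2m-1}=0$, as $K(\Q,1)$ has cohomological dimension $1$ rationally in the relevant sense. I would make this rigorous using a rational Lie or minimal-model description of $F$ through degree $2m$. This yields \eqref{eq:main}, and $\Q$-badness follows since $\coker\neq0$. For \eqref{eq:spherical-main}, a nonzero rational Hurewicz class gives a map $S^n\to X$ inducing a nonzero map on $H_n$, with a rational left inverse on $H_n$ after rationalizing. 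Since $X$ is simply connected, $X_\Q$ retracts rationally onto something containing $S^n_\Q$ in $H_n$, and naturality of $\eta$ for $S^1\vee S^n\to Y\to S^1\vee K(\Q,n)$ transfers the $\cont$-dimensional cokernel in degree $2n$. The main obstacle is the first step, the identification of the completed fiber homology as the $I$-adic completion (with no higher $\lim^1$-type corrections), together with the uncountability lemma for the coinvariants of the completed tensor square.
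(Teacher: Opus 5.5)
\textbf{There is a genuine gap.} Your second step rests on a false claim. The fiber $F$, the universal cover of $\comp Y$, is not an H-space, and $H_{2m}(F)$ need not contain any symmetric or exterior square of $H_m(F)$. The diagonal only gives a map $H_{2m}(F)\to A^2_m(H_mF)$, in the opposite direction.

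Take $X=S^m$ with $m\geq3$. Then $F\simeq\real{N_X}$, where $N_X$ is the closed ideal generated by $b$ in $\widehat\LL(a,b)$ and $d=0$. Here $N_{m-1}\cong P=\Q[[t]]$ is the completed span of the $b_j=\ad_a^jb$, and $N_{2m-2}$ is the completed span of the $[b_i,b_j]$, i.e.\ power series in $t_1,t_2$ of the appropriate symmetry. For degree reasons $\CE(N_X)_{2m}\cong A^2_m(N_{m-1})$, spanned by the $(sx)(sy)$, and $\CE(N_X)_{2m-1}=sN_{2m-2}$. The differential $(sx)(sy)\mapsto\pm s[x,y]$ is injective because $\Q[[t]]\otimes\Q[[t]]\to\Q[[t_1,t_2]]$ is.

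Consequences for your plan:
\begin{enumerate}
\item $H_{2m}(F)=0$, so the term $H_0(\Gamma;H_{2m}F)$ you intend to exploit vanishes.
\item $H_{2m}(\comp Y)\cong H_1(\Gamma;H_{2m-1}F)$, where $H_{2m-1}(F)=N_{2m-2}/[N_{m-1},N_{m-1}]$ consists of infinite series of Whitehead products modulo finite sums.
\item By the sequence $0\to A^2_m(P)\to N_{2m-2}\to H_{2m-1}(F)\to0$, this group is the kernel of $A^2_m(P)_\Gamma\to(N_{2m-2})_\Gamma$. Every nonzero class therefore has filtration exactly one, i.e.\ a nonzero bar-degree-one component.
\item Your algebraic lemma, a dimension count for a quotient of the completed square $\Q[[s_1,s_2]]$, concerns a group that does not occur in $H_{2m}(\comp Y)$. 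It is also trivial, since $\Q[[s_1,s_2]]/((1+s_1)(1+s_2)-1)\cong\Q[[s]]$.
\end{enumerate}

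The missing idea is the paper's: do not compute $H_{2m}(\comp Y)$ at all. Instead:
\begin{enumerate}
\item Map $L$ onto $G_m=\Q a\ltimes P[m-1]$, whose fiber is $K(P,m)$, so that $H_{2m}(\real{G_m})\cong A^2_m(P)_\Gamma$ genuinely contains the algebraic square.
\item Detect $\cont$ classes $[f\star1]$ there using the countable field $\K=\Frac(\Q[e^{qt}:q\in\Q])$.
\item Lift them by the explicit cycles $Q_f=(sr_f)(sb)+(-1)^m[1]\otimes sv_f$. Here the telescoping sums $u_j$ and the Bernoulli correction $v_f=B(D)u_f$ solve $(e^D-1)v_f=[r_f,b]$.
\end{enumerate}

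Three further steps also fail as stated.

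First, $H_{2m}(Y)=H_{2m}(X)$. The cover $\bigvee_{\Z}X$ has no cross terms, so no ``$M\otimes M$'' appears. Since $H_{2m}(X)$ may have arbitrary dimension, countability cannot bound its image. The paper instead shows that on the $X$ summand the map to $\real{G_m}$ factors through $K(\Q,m)$, so that image is at most one-dimensional.

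Second, for $m=1$ the fiber is not simply connected, and your fiberwise quadratic-part argument does not apply. This case contains the Ivanov--Mikhailov theorem. The paper reduces to it through a loop $\gamma$ and a class $u:X\to K(\Q,1)$, using $H_2(S^1\vee K(\Q,1))=0$.

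Third, in the spherical case with $n$ even, $S^1\vee S^n\to S^1\vee K(\Q,n)$ is not a rational homology equivalence, because $H_{2n}(K(\Q,n))=\Q$. So naturality alone does not transfer the cokernel. The paper instead reruns the quotient construction with a cycle of $M_X$ whose indecomposable projection is nonzero.
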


\begin{corollary}\label{cor:criterion}
For a connected based CW complex $X$, the wedge $S^1\vee X$ is
$\Q$-good if and only if $\widetilde H_*(X)=0$.
\end{corollary}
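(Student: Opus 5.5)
The proof should be short, since it combines Theorem~\ref{thm:main} with the definition of goodness. I will treat the two directions separately.

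For the forward direction, suppose $\widetilde H_*(X)\neq0$. Then the integer $m=\min\{j\ge1:H_j(X)\neq0\}$ is defined, and Theorem~\ref{thm:main} applies to $Y=S^1\vee X$. By \eqref{eq:main}, the map $H_{2m}(\eta_Y)$ has a cokernel of dimension at least $\cont$. In particular it is not surjective, so $\eta_Y$ is not a rational homology isomorphism in degree $2m\ge2$, where reduced and unreduced homology agree. Hence $Y$ is $\Q$-bad.

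For the converse, suppose $\widetilde H_*(X)=0$ with rational coefficients. The inclusion $i\colon S^1\to S^1\vee X$ then induces an isomorphism on reduced rational homology, because $\widetilde H_*(S^1\vee X)\cong\widetilde H_*(S^1)\oplus\widetilde H_*(X)$. By the Bousfield--Kan criterion recalled above, $i^\wedge_\Q$ is a weak equivalence. Naturality of $\eta$ gives a commutative square
\[
\begin{tikzcd}
S^1 \arrow[r,"i"] \arrow[d,"\eta_{S^1}"'] & S^1\vee X \arrow[d,"\eta_Y"]\\
\comp{(S^1)} \arrow[r,"\simeq"] & \comp{Y}
\end{tikzcd}
\]
whose horizontal maps are rational homology isomorphisms. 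The space $S^1$ is nilpotent, so by \cite{BK} its $\Q$-completion is its rationalization, and $\eta_{S^1}$ is a rational homology isomorphism. By two-out-of-three in the square, $H_*(\eta_Y)$ is an isomorphism, so $Y$ is $\Q$-good.

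The only point needing care is the nilpotent case for $S^1$. I would cite the statement from \cite{BK} quoted in the introduction: completion agrees with rationalization for nilpotent spaces, and rationalization is a $\Q$-homology equivalence. Alternatively, one can compute directly that $\comp{(S^1)}\simeq K(\Q,1)$. I do not expect a real obstacle here, since all the substantive work lies in Theorem~\ref{thm:main}.
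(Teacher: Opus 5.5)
Your proposal is correct and follows the paper's argument: necessity comes from Theorem~\ref{thm:main}, and sufficiency comes from naturality of completion together with the goodness of $S^1$. The only cosmetic difference is that you use the inclusion $S^1\to S^1\vee X$ where the paper uses the collapse $S^1\vee X\to S^1$; either map is a rational homology equivalence when $\widetilde H_*(X)=0$, and the square argument is otherwise identical.
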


\begin{proof}
The theorem gives necessity. If $X$ is rationally acyclic, the collapse
$S^1\vee X\to S^1$ is a rational homology equivalence. Naturality of
completion, together with the goodness of $S^1$, then shows that
$S^1\vee X$ is $\Q$-good.
\end{proof}

\begin{corollary}[Wedges with nonzero first homology]
\label{cor:generalisation}
Let $A$ and $X$ be connected based CW complexes such that
$H_1(A)\neq0$ and $\widetilde H_*(X)\neq0$. Set
\[
 m=\min\{j\geq1:H_j(X)\neq0\},\qquad Z=A\vee X.
\]
Then
\begin{equation}\label{eq:generalised-main}
 \dim_{\Q}\coker\bigl(H_{2m}(Z)
       \xrightarrow{\,H_{2m}(\eta_Z)\,}H_{2m}(\comp Z)\bigr)
 \geq\cont.
\end{equation}
In particular, $Z$ is $\Q$-bad and
$\dim_{\Q}H_{2m}(\comp Z)\geq\cont$.
More generally, if $X$ is simply connected and the rational Hurewicz map
\[
 \pi_n(X)\otimes\Q\longrightarrow H_n(X)
\]
is nonzero for some $n\geq2$, then
\begin{equation}\label{eq:generalised-spherical}
 \dim_{\Q}\coker\bigl(H_{2n}(Z)
       \xrightarrow{\,H_{2n}(\eta_Z)\,}H_{2n}(\comp Z)\bigr)
 \geq\cont.
\end{equation}
\end{corollary}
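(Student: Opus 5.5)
We will deduce Corollary~\ref{cor:generalisation} from Theorem~\ref{thm:main} by retracting $Z=A\vee X$ onto a space of the form $S^1\vee X$ and using naturality of $\eta$. Since $H_1(A)\neq0$, we have $\operatorname{Hom}(\pi_1(A),\Q)\cong H^1(A;\Q)\neq0$. Hence there is a nontrivial homomorphism $\pi_1(A)\to\Q$ whose image is a nonzero finitely generated subgroup of $\Q$; however, we want a map to $S^1$. A cleaner choice is a rational class $u\in H^1(A;\Z)\otimes\Q$, which we realize after rescaling by an integral class. The obstruction is that $H^1(A;\Z)=\operatorname{Hom}(H_1(A;\Z),\Z)$ may vanish even though $H_1(A;\Q)\neq0$, for instance when $H_1(A;\Z)=\Q$.

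So we first try a more robust route: find maps $S^1\xrightarrow{i}A\xrightarrow{r}B$ with $B$ a $\Q$-good space with $H_1(B)=\Q$ and $r\circ i$ an isomorphism on $H_1(-;\Q)$. Take $i$ representing a loop whose class in $H_1(A)$ is nonzero. Choose a homomorphism $\phi:\pi_1(A)\to\Q$ with $\phi(i)=1$, possible since $\Q$ is injective and the class of $i$ is non-torsion in $H_1(A;\Z)$. Let $r:A\to K(\Q,1)$ realize $\phi$. Now $K(\Q,1)$ is nilpotent, hence $\Q$-good, and $S^1\to K(\Q,1)$ is a rational homology equivalence (both have rational homology $\Q$ in degrees $0,1$), so it induces an equivalence on $\Q$-completions.

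Form the maps
\[
 Y=S^1\vee X\xrightarrow{\;i\vee1\;}Z=A\vee X\xrightarrow{\;r\vee1\;}K(\Q,1)\vee X=:W.
\]
The composite $Y\to W$ is a rational homology equivalence, since it is so on each wedge summand and reduced homology is additive on wedges. By \cite{BK}, its completion $\comp Y\to\comp W$ is therefore a weak equivalence. Naturality of $\eta$ gives a commutative ladder of maps $H_{2m}(Y)\to H_{2m}(Z)\to H_{2m}(W)$ over $H_{2m}(\comp Y)\to H_{2m}(\comp Z)\to H_{2m}(\comp W)$. In the bottom row the composite is an isomorphism, so $H_{2m}(\comp Y)$ is a retract of $H_{2m}(\comp Z)$.

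Let $\rho:H_{2m}(\comp Z)\to H_{2m}(\comp Y)$ be the bottom-right map followed by the inverse isomorphism. Then $\rho$ is surjective, and $\rho\circ H_{2m}(\eta_Z)$ factors as
\[
 H_{2m}(Z)\to H_{2m}(W)\cong H_{2m}(Y)\xrightarrow{H_{2m}(\eta_Y)} H_{2m}(\comp Y),
\]
using that $H_{2m}(Y)\to H_{2m}(W)$ is an isomorphism. Hence $\rho$ induces a surjection
\[
 \coker H_{2m}(\eta_Z)\twoheadrightarrow\coker H_{2m}(\eta_Y).
\]
Theorem~\ref{thm:main}, with the same $m$ since $X$ is unchanged, then gives dimension at least $\cont$. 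The spherical version \eqref{eq:generalised-spherical} follows from the same diagram in degree $2n$. Badness of $Z$ follows because the cokernel is nonzero.

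The main point needing care is the factorization through $W$ rather than a retraction onto $S^1\vee X$ itself. One must check that $K(\Q,1)$ is $\Q$-good, which holds because it is nilpotent, with completion its rationalization, and that the completion of a rational homology equivalence is an equivalence, which is the cited property from \cite{BK}. One must also check that the class of $i$ can be chosen non-torsion in integral $H_1$ so that $\phi$ exists; this holds because $H_1(A;\Z)\otimes\Q\neq0$.
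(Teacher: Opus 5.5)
Your proposal is correct and is essentially the paper's own argument (Proposition~\ref{prop:circle-transfer}): it uses the same factorization $S^1\vee X\to A\vee X\to K(\Q,1)\vee X$ of a rational homology equivalence, and your induced surjection $\coker H_{2m}(\eta_Z)\to\coker H_{2m}(\eta_Y)$ is exactly the paper's retraction $r_j$. There are two harmless blemishes: the aside in your first paragraph, that a nontrivial homomorphism $\pi_1(A)\to\Q$ can be chosen with finitely generated image, is false in general (take $\pi_1(A)=\Q$), and the $\Q$-goodness of $K(\Q,1)$ is never actually used.
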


The proof is a consequence of Theorem~\ref{thm:main} and naturality of
completion; it is given in Section~\ref{sec:generalisation}. No
finiteness or nilpotence hypothesis on $A$ is required.

The principal case is $m\geq2$, which includes simply connected spaces
$X$ in the theorem. To explain the argument, take a minimal Lie model
$(\LL(V),d)$ of $X$ as in Lemma~\ref{lem:model} and
\eqref{eq:wedge-model}, where
$V_{q-1}\cong\widetilde H_q(X)$ and the differential is decomposable.
Wedging with a circle adjoins a degree-zero cycle $a$ and requires
completion by bracket length, giving the model of~\eqref{eq:L}:
\[
 L=(\widehat\LL(a,V),d),\qquad da=0.
\]
Choose a nonzero bottom-degree generator $b\in V_{m-1}$. Degree
considerations give $db=0$, as checked at the beginning of the proof in
Section~\ref{sec:main-proof}. Its iterated brackets
$b_j=\ad_a^j(b)$ all have degree $m-1$, but their bracket lengths tend
to infinity. Completion therefore allows arbitrary series in the $b_j$
within this single homological degree. Sending these iterates to powers
of a formal variable gives the quotient constructed in
\eqref{eq:quotient}, with target defined in~\eqref{eq:target}:
\[
 L\twoheadrightarrow G_m=\Q a\ltimes\Q[[t]][m-1],
 \qquad b_j\longmapsto t^j,\qquad [a,f(t)]=tf(t).
\]
Its positive ideal is abelian, while the action of $a$ still records all
the iterates of $b$.

Upon realization of complete dgl's, Lemma~\ref{lem:split}, specialized
in~\eqref{eq:target-borel}, identifies the split extension defining
$G_m$ with a fibration over $K(\Q,1)$ having fiber $K(\Q[[t]],m)$ and
monodromy $f(t)\mapsto e^{qt}f(t)$ for $q\in\Q$. By
Lemma~\ref{lem:target-homology}, its degree-$2m$ homology is the
coinvariant quotient of the quadratic fiber homology. The symmetric
and exterior power-series detector of Lemma~\ref{lem:detector}, modeled
on \cite[Proposition~2.1]{IM}, detects a subspace of dimension $\cont$
in this quotient. The telescoping bracket identity~\eqref{eq:telescope}
and the Bernoulli correction in~\eqref{eq:series-lifts} then produce the
cycles~\eqref{eq:lifted-cycle}; Proposition~\ref{prop:lifting} shows
that their images contain this subspace.

Finally, the compatibility square~\eqref{eq:wedge-unit-square} shows
that the restriction to $X$ of the composite from the original wedge
factors through $K(\Q,m)$, yielding the at-most-one-dimensional image
in~\eqref{eq:small-image}. Comparing this with the
continuum-dimensional image~\eqref{eq:large-image} proves the cokernel
bound in Section~\ref{sec:main-proof}, regardless of the size of
$H_{2m}(X)$.
The same construction only needs a cycle with a nonzero indecomposable
component, not necessarily one in the bottom nonzero degree. In a
minimal Lie model, such cycles represent nonzero rational spherical
homology classes. This gives the more general bound
\eqref{eq:spherical-main}, proved at the end of
Section~\ref{sec:main-proof}.

Section~\ref{sec:chains} identifies the relevant split Lie realizations,
and Section~\ref{sec:chain-model} describes their chain models.
Section~\ref{sec:detector} computes the quotient
homology and constructs the detector, and Section~\ref{sec:lifts} gives
the lifted cycles. These ingredients prove the case $H_1(X)=0$ in
Section~\ref{sec:main-proof}. Section~\ref{sec:examples} reduces the
remaining case to the theorem of Ivanov and Mikhailov and records some
consequences.
Section~\ref{sec:generalisation} then replaces the circle by an arbitrary
connected based CW complex with nonzero first rational homology.

\section{Presenting the completion}
\label{sec:chains}


All Lie algebras and tensor products are over $\Q$. Differential graded
Lie algebras (dgls) are homologically graded, with differential of degree
$-1$. A dgl is \emph{positive} if it is concentrated in degrees at least
one. A \emph{complete dgl} is a dgl $M$ equipped with a decreasing
filtration by differential ideals satisfying
\[
 F^1M=M,\qquad [F^pM,F^qM]\subseteq F^{p+q}M,\qquad
 M\cong\varprojlim_r M/F^rM.
\]
Morphisms preserve each filtration level and are therefore continuous.
In particular, all maps used below preserve the specified filtration
indices. The free Lie algebra completed by bracket length is
\[
 \widehat\LL(V)=\varprojlim_r\LL(V)/\LL^{\geq r}(V).
\]
This filtration controls convergence of infinite Lie series. It is
distinct from homological degree, which may remain fixed as bracket
length increases.

We use the complete Lie realization of \cite{LieModels}, defined by
\begin{equation}\label{eq:Lie_realization}
 \real M_n=\operatorname{Hom}_{\mathbf{cdgl}}(\mathfrak L_n,M),
 \qquad
 \mathfrak L_n=(\widehat\LL(s^{-1}C_*(\Delta^n;\Q)),d).
\end{equation}
Here $C_*$ denotes normalized simplicial chains, $\mathfrak L_\bullet$
is the cosimplicial complete simplex model, and its filtration is bracket
length. Morphisms into $M$ preserve the specified filtration levels.
We use the standard sign convention for desuspension:
\(d_0(s^{-1}c)=-s^{-1}\partial c\).
Its homotopy groups, for $M$ nonnegatively graded, are
\begin{equation}\label{eq:homotopy}
 \pi_n\real M\cong H_{n-1}(M),\qquad n\geq1,
\end{equation}
with the Baker--Campbell--Hausdorff group structure on $H_0(M)$
(see \cite[Theorem~7.18 and \S 4.2]{LieModels}).
If $L_Y$ is a complete Lie model of a connected space $Y$, the unit map
identifies, naturally up to homotopy, with completion:
\begin{equation}\label{eq:completion}
 Y\longrightarrow\real{L_Y}\simeq\comp Y
 \qquad\text{\cite[Theorem 0.1]{Completion}.}
\end{equation}
This theorem is a key ingredient in the argument of this paper.

More explicitly, for a reduced simplicial set $K$ with unique vertex
$a_0$, put $\mathfrak L^a(K)=\mathfrak L(K)/(a_0)$, where
$\mathfrak L(K)$ is its global complete Lie model. This gives the
reduced/connected adjunction
\begin{equation}\label{eq:reduced-adjunction}
 \mathfrak L^a:\mathbf{sSet}_0\rightleftarrows
 \mathbf{cdgl}_0:\real{-}.
\end{equation}
The subscript $0$ means reduced simplicial sets on the left and
nonnegatively graded complete dgls on the right. The unit in
\eqref{eq:completion} is the unit of this adjunction
\cite[Section 1]{Completion}.

\medskip

We shall use the Maurer--Cartan description of the realization:
\[
 \MC_\bullet(M)=\MC\bigl(\Omega_\bullet\widehat\otimes M\bigr),
 \qquad
 \Omega_\bullet=A_{\mathrm{PL}}(\Delta^\bullet),
\]
where $\Omega_\bullet^k$ is placed in homological degree $-k$ and
the tensor product is completed in the filtration of $M$:
\[
 \Omega_\bullet\widehat\otimes M
 =\varprojlim_r(\Omega_\bullet\otimes M/F^rM).
\]
Here $\MC(A)$ consists of elements $\omega\in A_{-1}$ satisfying
$d\omega+\tfrac12[\omega,\omega]=0$.
Explicitly,
\[
 \MC_n(M)=
 \left\{\omega\in(\Omega_n\widehat\otimes M)_{-1}:
 (d_{\Omega}+d_M)\omega+\tfrac12[\omega,\omega]=0\right\}.
\]
Writing
\[
 \Omega_n=
 \frac{\Q[t_0,\ldots,t_n,dt_0,\ldots,dt_n]}
 {\bigl(\sum_i t_i-1,\,\sum_i dt_i\bigr)},
 \qquad |t_i|=0,\quad |dt_i|=1,
\]
the simplicial operator associated to an order-preserving map
$\vartheta:[k]\to[n]$ is
\begin{equation}\label{eq:mc-simplicial-operators}
\begin{aligned}
 \vartheta^*:\MC_n(M)&\longrightarrow\MC_k(M),\\
 \omega&\longmapsto
   (\vartheta^*\widehat\otimes\mathrm{id}_M)(\omega),\\
 \vartheta^*(t_i)&=\sum_{\vartheta(j)=i}t_j,
 & \vartheta^*(dt_i)&=\sum_{\vartheta(j)=i}dt_j.
\end{aligned}
\end{equation}

An $n$-simplex $\omega$ has the following explicit form:
\begin{equation}\label{n-simplex_of_MC}\omega
=
X_{\varnothing}(t)
+\sum_{i=1}^{n}dt_i\,X_i(t)
+\sum_{1\leq i<j\leq n}dt_i\,dt_j\,X_{ij}(t)
+\cdots
+dt_1\cdots dt_n\,X_{1\cdots n}(t),\end{equation}
where, for \(I=\{i_1<\cdots<i_p\}\),
\[
X_I(t)\in
\mathbb Q[t_1,\ldots,t_n]\widehat\otimes M_{p-1}.
\]
Since \(M_{-1}=0\), we have \(X_{\varnothing}=0\) for all $\omega$.

\begin{lemma}[Maurer--Cartan comparison]
\label{lem:mc-comparison}
Let \(M\) be a nonnegatively graded complete filtered dgl. There
are natural simplicial maps
\[
 \real M
 \xrightarrow{\ \iota_M\ }
 \MC_\bullet(M)
 \xrightarrow{\ \rho_M\ }
 \real M,
 \qquad
 \MC_\bullet(M)
 =
 \MC\bigl(\Omega_\bullet\widehat\otimes M\bigr),
\]
such that
\[
 \rho_M\iota_M=\operatorname{id}_{\real M},
\]
and both maps are weak equivalences. Naturality is with respect
to filtration-preserving dgl morphisms. In particular, both maps
commute with every filtration-preserving dgl automorphism of \(M\).
\end{lemma}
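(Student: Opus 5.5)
We will build both maps from the standard comparison between the two cosimplicial objects $\mathfrak L_\bullet$ and $\Omega_\bullet$. The key input is that $\Omega_n$ carries a canonical Maurer--Cartan element. This element can be read off from the Dupont contraction $\Omega_n\to C^*(\Delta^n)$ together with the homotopy transfer theorem. In this way the complete dgl $\mathfrak L_n=\widehat\LL(s^{-1}C_*(\Delta^n))$ appears as the Quillen-type dual (the cellular model) of $\Omega_n$. Here $\mathfrak L_n$ is the model, in the convention of \cite{LieModels}, whose linear part is the desuspended simplicial chains and whose higher brackets come from transferring the $C_\infty$ structure. Concretely:

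\begin{itemize}
\item The Maurer--Cartan element $\tau_n\in \Omega_n\widehat\otimes\mathfrak L_n$ is cosimplicial in $n$.
\item Its linear part is $\sum_\sigma \omega_\sigma\otimes s^{-1}\sigma$, where the $\omega_\sigma$ are the Whitney elementary forms.
\end{itemize}

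\emph{Step 1: the map $\iota_M$.} Given $\varphi\in\real M_n=\operatorname{Hom}(\mathfrak L_n,M)$, we set
\[
\iota_M(\varphi)=(\mathrm{id}\widehat\otimes\varphi)(\tau_n).
\]
This lies in $\MC_n(M)$ because $\mathrm{id}\widehat\otimes\varphi$ is a filtration-preserving dgl map, so it carries Maurer--Cartan elements to Maurer--Cartan elements. Compatibility with the operators \eqref{eq:mc-simplicial-operators} is exactly the cosimplicial naturality of $\tau_\bullet$. Naturality in $M$ is automatic, since the formula involves $M$ only through postcomposition with $\varphi$.

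\emph{Step 2: the map $\rho_M$.} The Dupont contraction gives a simplicial family of integration maps $\Omega_n\to C^*(\Delta^n)$ with homotopy $h_n$. Given $\omega\in\MC_n(M)$, written in the form \eqref{n-simplex_of_MC}, we apply the Getzler/Dolgushev gauge-fixing procedure. The condition $h_n\omega=0$ and the Maurer--Cartan equation together determine $\omega$ from its integrals $\int_\sigma\omega\in M_{|\sigma|-1}$. These integrals define a morphism $\mathfrak L_n\to M$ on generators $s^{-1}\sigma$.

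In more detail:
\begin{itemize}
\item We set $\rho_M(\omega)(s^{-1}\sigma)=\int_\sigma \omega$, corrected by the transferred higher terms coming from the homotopy transfer formula.
\item We check that this assignment respects the differential of $\mathfrak L_n$, namely $d_0(s^{-1}c)=-s^{-1}\partial c$ plus the bracket terms. This follows from Stokes' theorem applied to the Maurer--Cartan equation.
\item Convergence of all the sums uses completeness of $M$. Each correction term raises the filtration degree, so the sums converge in $\varprojlim M/F^r$.
\end{itemize}

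The identity $\rho_M\iota_M=\mathrm{id}$ then reduces to a single universal computation in $M=\mathfrak L_n$ with $\varphi=\mathrm{id}$. Namely, $\tau_n$ is gauge-fixed ($h_n\tau_n=0$), and its integrals recover the generators. Both of these properties are built into the construction of $\tau_n$ by transfer.

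\emph{Step 3: weak equivalences.} Since $\rho_M\iota_M=\mathrm{id}$, it suffices to show that $\iota_M$ is a weak equivalence; two-out-of-three then gives the same for $\rho_M$. We argue by the standard tower argument.

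\begin{itemize}
\item Both functors send the tower $M/F^r$ to towers of Kan fibrations, and both commute with the limit.
\item On each abelian layer $F^rM/F^{r+1}M$, the map $\iota$ becomes the comparison between the Dold--Kan type realization of a chain complex, built from $C^*(\Delta^\bullet)$, and $\MC(\Omega_\bullet\otimes V)$. The latter is also a linear realization.
\item On these layers $\iota$ is a weak equivalence by the simplicial de Rham theorem, i.e.\ the Dupont contraction.
\item Comparing the long exact sequences of homotopy groups level by level, then passing to $\varprojlim$, gives the result. The $\varprojlim^1$ terms agree on both sides and the maps on $\pi_0$ are bijective.
\end{itemize}
Alternatively, we can simply cite the weak equivalence $\MC_\bullet(M)\simeq\real M$ from \cite[\S 7]{LieModels}.

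\emph{Expected obstacle.} The main difficulty is Step 2: defining $\rho_M$ simplicially and proving that it is a dgl morphism compatible with all faces, not just faces up to homotopy. For this we would rely on Dupont's simplicial contraction, whose homotopies commute strictly with the face maps. The final claim about automorphisms is then immediate: any filtration-preserving automorphism $g$ of $M$ acts by $\mathrm{id}\widehat\otimes g$ on $\MC_\bullet(M)$ and by postcomposition on $\real M$, and naturality of $\iota_M$ and $\rho_M$ is exactly commutation with these actions.
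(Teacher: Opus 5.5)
\paragraph{A genuine gap in Step~2.} Your Steps~1 and~3 are sound, and they already depart from the paper.

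\begin{itemize}
\item \emph{The paper's route.} It never writes down $\iota_M$ or $\rho_M$ on $M$ itself. It takes the nilpotent comparison of \cite{InfinityQuillen} as given for each quotient $M/F^rM$. It then passes to $\varprojlim_r$ along towers of Kan fibrations.
\item \emph{Your Step~3.} Your induction over the layers reproves the nilpotent case from the abelian simplicial de~Rham theorem. This works because $M/F^2M$ and each $F^rM/F^{r+1}M$ are abelian. Also, since $M_{-1}=0$, both sides have a single vertex, so the fibres over it are the linear realizations.
\item \emph{A caveat on Step~1.} Describing $\mathfrak L_n$ as the Quillen construction on the $C_\infty$-coalgebra $C_*(\Delta^n)$ transferred along Dupont's contraction is not the definition of $\mathfrak L_\bullet$ in \cite{LieModels}. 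It is the main theorem you are importing, essentially the natural isomorphism $\real M\cong\gamma_\bullet(M)$ that the paper cites. Both your $\tau_\bullet$ and your $\rho$ depend on it.
\end{itemize}

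\paragraph{What fails in Step~2.} The fact that a Maurer--Cartan element with $h_n\omega=0$ is determined by its integrals says nothing about a general $\omega\in\MC_n(M)$. Your proposed verification also does not work. Stokes' theorem applied to $(d_\Omega+d_M)\omega+\frac12[\omega,\omega]=0$ produces a term $\int_\sigma\frac12[\omega,\omega]$, and this term is not determined by the face integrals of $\omega$. So Stokes only controls the linear part of the differential of $\mathfrak L_n$.

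\paragraph{The missing idea.} Define $\rho_M$ as the Maurer--Cartan map of the homotopy-transfer $L_\infty$-morphism $p_\infty$. Here $p_\infty$ goes from $\Omega_n\widehat\otimes M$ to $C^*(\Delta^n)\widehat\otimes M$ with its transferred structure, and it is built from Dupont's contraction $(i,p,h)$ by the perturbation lemma. Concretely, $\rho_M(\omega)=\sum_{k\geq1}\frac1{k!}p_k(\omega,\ldots,\omega)$, which converges because $p_k$ contains $k-1$ brackets. This map has the properties you need:
\begin{itemize}
\item It lands in Maurer--Cartan elements because $p_\infty$ is an $L_\infty$-morphism. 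These elements correspond to morphisms $\mathfrak L_n\to M$ by the identification above.
\item It is simplicial because $i$, $p$, $h$ commute with all simplicial operators.
\item It is natural in $M$.
\end{itemize}

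\paragraph{The identity $\rho_M\iota_M=\mathrm{id}$.} This needs a fact you did not state. Use the side conditions $h^2=0$, $hi=0$, $ph=0$ and the tensor-trick homotopy. Then every term of $p_k$ with $k\geq2$ begins by applying $h$ to one of the inputs, so $p_\infty(\omega)=p(\omega)$ whenever $h\omega=0$. Combined with $h\tau_n=0$ and $p(\tau_n)=\sum_\sigma\sigma^\vee\otimes s^{-1}\sigma$, this gives the identity.

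With these repairs your route is a valid and more explicit alternative, with formulas at the complete level. The paper's route is shorter but rests entirely on the cited nilpotent comparison.
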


The proof is given in Appendix~\ref{app:comparison-results}.

\medskip
Let $\Gamma=(\Q,+)$ be a discrete group. Suppose that $M$ is a positive
complete dgl, and that $D$ is a degree-zero chain derivation with
\begin{equation}\label{eq:raising}
 D(F^rM)\subseteq F^{r+1}M.
\end{equation}
By~\eqref{eq:homotopy}, $\real{M}$ is simply connected.
The exponential $T_q=e^{qD}$ is a filtered chain automorphism for every $q\in\Q$.
Set
\[
 L_D=\Q a\ltimes_D M,\qquad |a|=0,\quad da=0,\quad[a,x]=Dx,
\]
with $F^1L_D=L_D$ and $F^rL_D=F^rM$ for $r\geq2$.
These formulas define a complete dgl filtration.

The projection $L_D\to\Q a$ and its section give maps over
$\real{\Q a}\simeq K(\Gamma,1)$. The following lemma identifies the
realization of $L_D\to \Q a$: the infinitesimal action $D$ integrates to
the strict action $q\mapsto e^{qD}$ on the fiber. We write
$E\Gamma\times_\Gamma F$ for the Borel construction, or homotopy
quotient, of a simplicial $\Gamma$-space $F$.

\begin{lemma}[Split realization]\label{lem:split}
There is a natural weak equivalence
\begin{equation}\label{eq:borel}
 \real{L_D}\simeq E\Gamma\times_\Gamma\real M,
 \qquad q\cdot x=\real{e^{qD}}(x),
\end{equation}
over $B\Gamma$. In particular, $\real M$ has the homotopy type of the
universal cover of $\real{L_D}$.
\end{lemma}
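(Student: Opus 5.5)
We work through the Maurer--Cartan model of Lemma~\ref{lem:mc-comparison} and build an explicit simplicial map from the Borel construction to $\MC_\bullet(L_D)$. Model $B\Gamma$ by the nerve $N\Gamma$ and $E\Gamma\times_\Gamma\real M$ by the simplicial set whose $n$-simplices are tuples $(q_1,\dots,q_n;x)$ with $q_i\in\Gamma$ and $x\in\real M_n$. The face $d_0$ uses the action $x\mapsto\real{e^{q_1D}}(x)$ together with the face of $x$. Since $e^{qD}$ is a filtered chain automorphism of $M$ by \eqref{eq:raising}, naturality in Lemma~\ref{lem:mc-comparison} lets us replace $\real M$ by $\MC_\bullet(M)$ equivariantly. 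So it suffices to produce a weak equivalence
\[
 E\Gamma\times_\Gamma\MC_\bullet(M)\longrightarrow\MC_\bullet(L_D)
\]
over $\MC_\bullet(\Q a)\simeq\real{\Q a}\simeq K(\Gamma,1)$.

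\textbf{Step 1: the base.} Since $\Q a$ is abelian and concentrated in degree $0$, we have $\MC_n(\Q a)=\Omega^1_n$ closed $\otimes\,\Q a$. The simplex $(q_1,\dots,q_n)$ of $N\Gamma$ maps to the $1$-form $\alpha=\sum_i \phi_i\,a$, where $\phi_i$ are the linear $1$-forms pulling back the Whitney forms of the edges. Concretely, use $\alpha_{(q)}=\sum_i (q_i+\dots+q_n)\,dt_i\,a$, with signs and indexing fixed by the vertex sums $p_k=q_1+\dots+q_k$. This form is closed and exact, $\alpha=d(\sum_k p_k t_k)\,a$, and the map is compatible with faces. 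It is a weak equivalence $N\Gamma\to\MC_\bullet(\Q a)$ because both sides are $K(\Q,1)$ and the map is the identity on $\pi_1=H_0=\Q$.

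\textbf{Step 2: gauge-twisting the fiber.} Put $h=\sum_k p_k t_k\in\Omega^0_n$, so that $\alpha=dh\cdot a$. For $\omega\in\MC_n(M)$, define
\[
 \Phi(q;\omega)=\alpha+e^{hD}\omega,
\]
where $e^{hD}$ acts $\Omega$-linearly on $\Omega_n\widehat\otimes M$. The series converges because $D$ raises filtration. In $L_D$ the element $\alpha$ is MC, and $[\alpha,y]=dh\,Dy$. Moreover $d(e^{hD}\omega)=e^{hD}d\omega+dh\,D e^{hD}\omega$, because $D$ commutes with $d_M$ and is a derivation of the bracket. Expanding $d\Phi+\tfrac12[\Phi,\Phi]$, the terms with $dh\,D(\cdot)$ cancel against $[\alpha,e^{hD}\omega]$, up to sign conventions for odd $dh$ that must be checked. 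The remaining terms are $e^{hD}(d\omega+\tfrac12[\omega,\omega])=0$. So $\Phi$ is MC.

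For simplicial compatibility, the faces $d_i$ with $i\ge1$ commute with $\Phi$ directly. At $d_0$, the function $h$ restricts to $p_1+\sum_{k\ge1}(p_k-p_1)t_k$, so the constant $e^{p_1D}=e^{q_1D}$ factors out. This reproduces exactly the twisted face of the Borel construction. The map lies over Step~1 because the projection kills $M$.

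\textbf{Step 3: weak equivalence.} Both sides are Kan fibrations over $K(\Gamma,1)$. For the target, $\MC_\bullet$ sends surjections of complete dgls to fibrations, and the fiber over the base point is $\MC_\bullet(M)$. On fibers, $\Phi$ restricts to the identity because $h=0$ when $q=0$. The five lemma on long exact sequences, or a comparison of fibrations, then gives the equivalence. Composing with $\rho_{L_D}$ and the equivariant $\iota_M$ yields \eqref{eq:borel}. Since the Borel construction over $B\Gamma$ has fiber $\real M$, which is simply connected, $\real M$ is the universal cover.

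\textbf{Main obstacle.} The delicate points are the sign bookkeeping in the MC verification with the odd form $dh$, and making $\Phi$ strictly simplicial at $d_0$. If the chosen $h$ fails to be compatible at the last face, I would instead index by $p_k$ and use $h=\sum_k p_k t_k$ with $t_0=1-\sum_{k\ge1} t_k$. Naturality of the equivalence in $D$ then follows from the naturality of the formulas.
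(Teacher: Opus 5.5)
Your proposal is correct once one sign is repaired. It is the paper's gauge-fixing argument, restricted to affine primitives and run in the opposite direction.

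The paper starts from an arbitrary simplex $\omega=\alpha a+\eta$ of $\MC_\bullet(L_D)$. It takes the unique primitive $h$ of $\alpha$ with $h(v_0)=0$ and gauges by $ha$. This gives an isomorphism $\MC_\bullet(L_D)\cong\mathcal H_\bullet\times_{B\Gamma}\bigl(E\Gamma\times_\Gamma\MC_\bullet(M)\bigr)$. The equivalence is then the projection, which is a pullback of the trivial Kan fibration $\epsilon\colon\mathcal H_\bullet\to B\Gamma$. So the paper needs neither a fibration property of $\MC_\bullet(L_D)\to\MC_\bullet(\Q a)$ nor a comparison of long exact sequences.

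You use only the affine primitives, which form a simplicial section of $\epsilon$ (the one in the paper's footnote). Your $\Phi$ is precisely the induced section of the paper's projection. This buys a direct map $E\Gamma\times_\Gamma\real M\to\real{L_D}$ instead of a zigzag, and simplicial compatibility is immediate because affine functions are stable under $\vartheta^*$. The price is the extra input that $\MC_\bullet(L_D)\to\MC_\bullet(\Q a)$ is a Kan fibration. This follows from Getzler's result on the nilpotent quotients $L_D/F^rL_D$ and passage to the tower limit, as in the appendix.

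On the sign, take the paper's convention $d\omega+\tfrac12[\omega,\omega]=0$. No Koszul sign enters, since $|a|=0$ and $h$ is even. One has $d_\Omega(e^{hD}\omega)=e^{hD}d_\Omega\omega+dh\,D(e^{hD}\omega)$ and $[dh\,a,e^{hD}\omega]=dh\,D(e^{hD}\omega)$. So with $\alpha=dh\,a$ these terms add rather than cancel.

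The fix is to take $\alpha=-dh\,a$ with $h=\sum_kp_kt_k$; the base map of Step~1 changes only by the harmless automorphism $-1$. Then the Maurer--Cartan identity holds, and your $d_0$ computation goes through verbatim with twist $e^{q_1D}$. This is exactly the paper's formula $\omega=(dh')a+e^{-h'D}\xi$ with $h'=-h$, that is, its normalization $\lambda_i=-\sum_{j\le i}q_j$.

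Separately, $d\bigl(\sum_kp_kt_k\bigr)=\sum_{k\ge1}p_k\,dt_k$. So the coefficients in your Step~1 display should be the partial sums $p_i$, not the tail sums.
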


\begin{proof}

By Lemma~\ref{lem:mc-comparison}, we may work with the completed
Maurer--Cartan realizations. An \(n\)-simplex of
\(\MC_\bullet(L_D)\) (see~\eqref{n-simplex_of_MC}) has a unique expression
\[
 \omega=\alpha a+\eta,
\]
where
\[
 \alpha=\sum_{i=1}^n f_i(t)\,dt_i\in\Omega_n^1,
 \qquad
 \eta=\Big(\sum_{i<j}dt_i\,dt_j\,X_{ij}(t)+\cdots\Big) \in(\Omega_n\widehat\otimes M)_{-1}.
\]
Its Maurer--Cartan equation is
\[
 d\alpha=0,\qquad
 d\eta+\alpha D\eta+\tfrac12[\eta,\eta]=0.
\]
Since $\alpha$ is closed, there is a unique $h\in\Omega_n^0$ such that
$dh=\alpha$ and $v_0^*h=0$, where $v_0:[0]\to[n]$ selects the initial vertex and $v_0^*$ is the pullback described in
\eqref{eq:mc-simplicial-operators}. The gauge transformation\footnote{This
is the dgl analogue of changing the trivialization of a connection: \(A\longmapsto gAg^{-1}-(dg)g^{-1},\; g=e^x.\) Indeed, if
$d_\omega=d+\ad_\omega$, then
$d_{x\mathbin{\mathcal G}\omega}=e^{\ad_x}d_\omega e^{-\ad_x}$.}
\[
 x\mathbin{\mathcal G}\omega
 =e^{\ad_x}\omega-
   \frac{e^{\ad_x}-1}{\ad_x}(dx)\footnote{We adopt the notation of \cite[p.~4]{LieModels}.}
\]
with $x=ha$ sends $\omega$ to $\xi=e^{hD}\eta\in\MC_n(M)$.
Thus the correspondence \begin{equation}\label{gauged_coordinates}\omega\leftrightarrow(h,\xi)\end{equation} is a bijection, with inverse
$(h,\xi)\mapsto(dh)a+e^{-hD}\xi=\omega$. Every series here converges by the condition
\eqref{eq:raising}.
\smallskip
For a simplicial operator $\vartheta:[k]\to[n]$, in these coordinates
$\omega$ transforms as
\begin{equation}\label{eq:coordinates}
 (h,\xi)\longmapsto
 \bigl(\vartheta^*h-c,\ e^{-cD}\vartheta^*\xi\bigr),
 \qquad c=h(v_{\vartheta(0)}).
\end{equation}
Give $\mathcal H_n=\{h\in\Omega_n^0:h(v_0)=0\}$ the following simplicial structure. For a simplicial operator \(\vartheta:[k]\to[n]\), define
\[
 \vartheta^\sharp:\mathcal H_n\longrightarrow\mathcal H_k,
 \qquad
 \vartheta^\sharp h
 =
 \vartheta^*h-h(v_{\vartheta(0)}).
\]

The map
\[
 \epsilon:\mathcal H_\bullet\longrightarrow B\Gamma,\qquad
 h\longmapsto
 \bigl(h(v_0)-h(v_1),\ldots,h(v_{n-1})-h(v_n)\bigr)
\]
is a trivial Kan fibration. Under the simplicial identification \(h\mapsto (dh)a\) of
\(\mathcal H_\bullet\) with \(\MC_\bullet(\Q a)\), the map $\epsilon$ is given, up to a sign, by
\[
 \alpha a\longmapsto
 \left(
  \int_{[v_0,v_1]}\alpha,\ldots,
  \int_{[v_{n-1},v_n]}\alpha
 \right),
\]
the standard simplicial de~Rham integration
\[
 \MC_\bullet(\Q a)\longrightarrow
 K(\Q,1)=B\Gamma.
\]
It is a weak equivalence by the abelian Dold--Kan identification
\cite[Proposition~5.1 and Corollary~5.11]{Getzler}. The map
\(\epsilon\) differs from it only by the automorphism \(q\mapsto-q\)
of \(B\Gamma\). The map \(\epsilon\) is degreewise surjective\footnote{Indeed, given
\((q_1,\ldots,q_n)\in(B\Gamma)_n=\Q^n\), set
\[
 \lambda_0=0,\qquad
 \lambda_i=-\sum_{j=1}^i q_j,
\]
and define the affine polynomial
\[
 h=\sum_{i=0}^n\lambda_i t_i\in\Omega_n^0.
\]
Then \(h(v_0)=0\) and
\[
 h(v_{i-1})-h(v_i)=q_i,
\]
so \(\epsilon_n(h)=(q_1,\ldots,q_n)\).}. Moreover, $\epsilon$ is a morphism of simplicial
vector spaces, and every degreewise surjective morphism of simplicial
vector spaces is a Kan fibration.

For the action $q\cdot\xi=e^{qD}\xi$, we use the inhomogeneous
simplicial Borel model $E\Gamma\times_\Gamma\MC_\bullet(M)$ whose $n$-simplices are elements of
$\MC_n(M)\times\Gamma^n$, written $(\xi;q_1,\ldots,q_n)$, and whose
operators are
\[
\begin{aligned}
 d_0(\xi;q_1,\ldots,q_n)
   &=(e^{q_1D}d_0\xi;q_2,\ldots,q_n),\\
 d_i(\xi;q_1,\ldots,q_n)
   &=(d_i\xi;q_1,\ldots,q_i+q_{i+1},\ldots,q_n)
     &&(0<i<n),\\
 d_n(\xi;q_1,\ldots,q_n)
   &=(d_n\xi;q_1,\ldots,q_{n-1}),\\
 s_i(\xi;q_1,\ldots,q_n)
   &=(s_i\xi;q_1,\ldots,q_i,0,q_{i+1},\ldots,q_n)
     &&(0\leq i\leq n).
\end{aligned}
\]
Its projection to the inhomogeneous bar model
$(B\Gamma)_n=\Gamma^n$ forgets $\xi$.
Using the coordinates~\eqref{gauged_coordinates}, the formula
\[
\Phi_n(\omega)
=
\bigl(h,(\xi;q_1,\ldots,q_n)\bigr),\qquad (q_1,\ldots,q_n)=\epsilon_n(h),\]
is compatible with simplicial operators and gives an isomorphism of simplicial sets
\[
 \MC_\bullet(L_D)\cong
 \mathcal H_\bullet\times_{B\Gamma}
 \bigl(E\Gamma\times_\Gamma\MC_\bullet(M)\bigr).
\]
Finally, we obtain the following zigzag of weak equivalences
\[\begin{tikzcd}
	{ \real{L_D}} & {\MC_\bullet(L_D) } & { \mathcal H_\bullet\times_{B\Gamma}  \bigl(E\Gamma\times_\Gamma\MC_\bullet(M)\bigr).} \\
	&& { E\Gamma\times_\Gamma\MC_\bullet(M)} \\
	&& { E\Gamma\times_\Gamma\real M}
	\arrow["{\overset{(1)}\sim}", hook, from=1-1, to=1-2]
	\arrow["\cong", from=1-2, to=1-3]
	\arrow["{\overset{(2)}\sim}", from=1-3, to=2-3]
	\arrow["{\overset{(4)}\sim}"', shift right=5, two heads, from=2-3, to=3-3]
	\arrow["{\overset{(3)}{\sim}}"', hook, from=3-3, to=2-3]
\end{tikzcd}\]
Map $(1)$ is the weak equivalence $\iota_{L_D}$ of
Lemma~\ref{lem:mc-comparison}, identified with the inclusion of the
gauge-fixed nerve into the Maurer--Cartan realization. Projection $(2)$ is the pullback of $\epsilon$, so
it is a trivial Kan fibration and thus a weak equivalence.
Maps $(3)$ and $(4)$ are induced by the maps in
Lemma~\ref{lem:mc-comparison}. Both are weak equivalences because
the Borel construction preserves equivariant weak equivalences;
map $(4)$ is a left inverse of map $(3)$.
We identify the edge coordinates of \(B\Gamma\) by
\[
 q_i
 =
 -\int_{[v_{i-1},v_i]}\alpha
 =
 h(v_{i-1})-h(v_i).
\]
In particular, for the zeroth face,
\(q_1=-h(v_1)=-c\), and hence the fiber coordinate transforms as
\[
 e^{-cD}d_0\xi=e^{q_1D}d_0\xi.
\]
Thus, with the standard inhomogeneous Borel convention, the resulting
monodromy action on realization is \(q\mapsto \real{e^{qD}}: \real{M}\to \real{M}\). Since $\real{M}$ is simply connected, the map
$\real{M}\to\real{L_D}$ induced by the inclusion $M\hookrightarrow L_D$
has the homotopy type of the universal covering map of $\real{L_D}$.
\end{proof}

\begin{lemma}[Wedge model]\label{lem:model}
Let $X$ be connected with $H_1(X)=0$. There is a minimal complete
Lie model
\begin{equation}\label{eq:wedge-model}
 M_X=(\LL(V),d),\qquad
 V_{q-1}\cong\widetilde H_q(X),\qquad dV\subseteq\LL^{\geq2}(V),
\end{equation}
with $V$ positive. A complete Lie model of $Y=S^1\vee X$ is
\begin{equation}\label{eq:L}
 L=(\widehat\LL(a,V),d),\qquad |a|=0,\quad da=0,
\end{equation}
whose differential restricts to that of $M_X$.
The equivalences $\real{M_X}\simeq\comp X$ and
$\real L\simeq\comp Y$ are compatible, up to homotopy, with the
inclusions of the wedge summands and their completion maps.
\end{lemma}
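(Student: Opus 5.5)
The plan is to take the three assertions of Lemma~\ref{lem:model} in turn: the minimal model of $X$, the model of the wedge, and the compatibility of the equivalences. The results needed are the existence of minimal Quillen/Lie models for connected spaces, the fact that complete Lie models turn wedges into free products (coproducts in $\mathbf{cdgl}$), and the completion theorem \eqref{eq:completion}.

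\emph{Step 1: the minimal model of $X$.} Since $H_1(X)=0$, the rational homology is concentrated in degrees $\geq2$. I will use the minimal complete Lie model theory of \cite{LieModels}: every connected space has a minimal model $(\widehat\LL(V),d)$ with $dV\subseteq\widehat\LL^{\geq2}(V)$ and $sV\cong\widetilde H_*(X)$, so that $V_{q-1}\cong\widetilde H_q(X)$. Here $V_0\cong H_1(X)=0$, so $V$ is positive. A positively graded free Lie algebra has only finitely many brackets of given length in each degree, since brackets of length $r$ lie in degree $\geq r$. Hence $\LL(V)\to\widehat\LL(V)$ is an isomorphism in each degree, and $M_X=(\LL(V),d)$ is an honest Quillen model; it is also a minimal complete model. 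The completion theorem \eqref{eq:completion} then gives $\real{M_X}\simeq\comp X$; since $X$ is simply connected only rationally, this is the form we need, rather than plain rationalization.

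\emph{Step 2: the wedge.} A model of $S^1$ is $(\widehat\LL(a),0)$ with $|a|=0$. For a reduced simplicial model $K\vee K'$, the functor $\mathfrak L^a$ of \eqref{eq:reduced-adjunction} is a left adjoint, so it preserves coproducts. The coproduct of $\widehat\LL(a)$ and $\widehat\LL(V)$ in $\mathbf{cdgl}$ is $(\widehat\LL(a,V),d)$ with $da=0$, and its differential restricts to $d$ on $V$. To replace $\mathfrak L^a(K)$ by the minimal model, I use that the coproduct of quasi-isomorphisms (filtered, between cofibrant free complete dgls) is again a quasi-isomorphism. This is the step I expect to be the main obstacle. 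Completion by bracket length does not obviously commute with homology, since $a$ sits in degree zero and infinitely many brackets occur in each degree. I would argue via cofibrancy: both sides are cofibrant in the model structure of \cite{LieModels}, the coproduct of weak equivalences between cofibrant objects is a weak equivalence, and weak equivalences there are detected by realization. Alternatively, the realization of the coproduct model is the completion of the wedge by \eqref{eq:completion}, which directly gives $\real L\simeq\comp Y$.

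\emph{Step 3: compatibility.} The inclusions $M_X\hookrightarrow L$ and $\widehat\LL(a)\hookrightarrow L$ are the coproduct inclusions. By naturality of the unit of \eqref{eq:reduced-adjunction} applied to $K\to K\vee K'$, the square formed by $X\to Y$, the two completion maps, and $\real{M_X}\to\real L$ commutes up to homotopy, once the zigzag of model quasi-isomorphisms from Step 2 is inserted. The same argument applies to the circle summand.
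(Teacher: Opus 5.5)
Your proposal is correct and follows essentially the same route as the paper. Both arguments take the minimal model with $V_{q-1}\cong\widetilde H_q(X)$, noting that positivity makes $\LL(V)=\widehat\LL(V)$ degreewise. Both then use the left adjoint $\mathfrak L^a$ to carry the wedge to a completed coproduct, use cofibrancy of $\Q a$ and $M_X$ to identify the derived coproduct with $(\widehat\LL(a,V),d)$, and use naturality of the adjunction unit for the compatibility square.

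There are two small slips. First, the positivity point is that only finitely many bracket \emph{lengths} occur in each degree, not finitely many brackets. Second, your ``alternatively'' in Step~2 does not bypass the comparison, since applying \eqref{eq:completion} to $L$ presupposes that $L$ already models $Y$; the cofibrancy argument is what does the work.
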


\begin{proof}
Choose a reduced simplicial model $K_X$ of $X$, form
$\mathfrak L^a(K_X)$, and take its minimal complete Lie model $M_X$.
The minimal-model theorem identifies the suspended indecomposables
with reduced rational homology
\cite[Proposition 8.35]{LieModels}, \cite[Section 1]{Completion}.
Thus $H_1(X)=0$ makes $V$ positive, and completion of $\LL(V)$ by
bracket length changes no homogeneous degree.

Use the model structure on complete dgls transferred from simplicial
sets, and its reduced/connected adjunction
\eqref{eq:reduced-adjunction}; see \cite[Corollary 8.2 and
Proposition 8.3]{LieModels} and \cite[Section 1]{Completion}.
All simplicial sets are cofibrant, so their wedge is a homotopy
coproduct. The left derived reduced model carries it to the derived
completed coproduct. The free models $\Q a$ and $M_X$ are cofibrant
\cite[Section 1]{Completion}, so this coproduct is exactly
$\Q a\,\widehat\amalg\,M_X=(\widehat\LL(a,V),d)$.
The differential on $V$ consequently contains no $a$.
Choose these model comparisons as a diagram containing the two
coproduct injections. Naturality of the adjunction unit and
\eqref{eq:completion} then give the homotopy-commutative square
\begin{equation}\label{eq:wedge-unit-square}
 \begin{array}{ccc}
 X&\longrightarrow&Y=S^1\vee X\\[2pt]
 \eta_X\big\downarrow&&\big\downarrow\eta_Y\\[2pt]
 \real{M_X}&\longrightarrow&\real L,
 \end{array}
\end{equation}
where the vertical maps use the equivalences with rational completion.
The analogous square holds for the circle summand. These squares give
the required compatibility with the original spaces.
\end{proof}

\begin{corollary}[Universal cover of the completed wedge]
\label{cor:wedge-universal-cover}
Let $X$ be connected with $H_1(X)=0$, and choose
the models of Lemma~\ref{lem:model}.  Let
\[
 p:L=(\widehat\LL(a,V),d)\longrightarrow\Q a,
 \qquad p(a)=a,\quad p(V)=0,
\]
and set
\[
 N_X=\ker p,\qquad D=\ad_a|_{N_X}.
\]
Then $N_X$ is a positive complete dgl, $D$ satisfies
\[
 D(F^rN_X)\subseteq F^{r+1}N_X,
\]
and the wedge model is the split extension
\[
 L=L_D=\Q a\ltimes_D N_X.
\]
Consequently there is a homotopy fibration sequence
\begin{equation}\label{eq:wedge-universal-cover}
 \real{N_X}\longrightarrow
 \comp{(S^1\vee X)}\simeq\real{L_D}
 \longrightarrow K(\Gamma,1),
 \qquad \Gamma=(\Q,+),
\end{equation}
whose monodromy is
\[
 q\longmapsto\real{e^{qD}}.
\]
In particular, $\real{N_X}$ has the homotopy type of the universal
cover of $\comp{(S^1\vee X)}$.  The inclusion $M_X\hookrightarrow N_X$
also induces a lift
\[
 \comp X\simeq\real{M_X}\longrightarrow\real{N_X}
\]
of the completed wedge inclusion
$\comp X\to\comp{(S^1\vee X)}$.
\end{corollary}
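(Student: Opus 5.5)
The plan is to verify the hypotheses of Lemma~\ref{lem:split} for $M=N_X$ and $D=\ad_a|_{N_X}$, and then read off the remaining claims from that lemma together with Lemma~\ref{lem:model}.

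First, the structure of $N_X$. The projection $p$ is a map of complete dgls, since $d$ vanishes on $a$ and $dV$ involves no $a$ (Lemma~\ref{lem:model}); hence $N_X=\ker p$ is a closed differential ideal. As a graded space, $N_X$ consists of the completed series of brackets containing at least one letter of $V$. Every such bracket has degree at least one, because $|a|=0$ and $V$ is positive. So $N_X$ is positive. We give $N_X$ the induced filtration $F^rN_X=N_X\cap F^rL$, which makes it complete.

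Second, the raising condition. For a bracket $x$ of length $r$, the element $[a,x]$ has length $r+1$. Continuity then gives $D(F^rN_X)\subseteq F^{r+1}N_X$. The map $D$ is a degree-zero derivation commuting with $d$, because $da=0$. Next we check the splitting. As a vector space $L=\Q a\oplus N_X$, since the degree-zero part of $L$ is spanned by $a$ together with series in $V$-containing brackets. The bracket is recovered from $[a,x]=Dx$ and $[a,a]=0$. The filtrations must also match. The paper's convention sets $F^rL_D=F^rM$ for $r\ge2$, while $F^rL$ for $r\ge2$ consists of brackets of length at least $r$, and $a$ has length one. So $F^rL\subseteq N_X$ for $r\geq 2$, and the two filtrations agree. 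Hence $L=L_D$ as complete dgls.

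Third, the conclusions. Lemma~\ref{lem:split} gives $\real{L_D}\simeq E\Gamma\times_\Gamma\real{N_X}$ over $B\Gamma=K(\Gamma,1)$ with monodromy $q\mapsto\real{e^{qD}}$. Combined with $\real L\simeq\comp{(S^1\vee X)}$ from Lemma~\ref{lem:model}, this yields the fibration~\eqref{eq:wedge-universal-cover}. By~\eqref{eq:homotopy}, $\real{N_X}$ is simply connected because $N_X$ is positive, so it is the universal cover, as the lemma states. For the lift, the inclusion $M_X\hookrightarrow L$ lands in $N_X$ because $p(V)=0$. Thus $M_X\to L$ factors as $M_X\hookrightarrow N_X\hookrightarrow L$, and realization gives $\real{M_X}\to\real{N_X}\to\real L$. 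By the compatibility square~\eqref{eq:wedge-unit-square}, the composite identifies with the completed wedge inclusion.

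The main obstacle is the filtration bookkeeping. One must check that the induced filtration on $N_X$ agrees with the one Lemma~\ref{lem:split} requires, and that morphisms such as $M_X\to N_X$, where $M_X$ is uncompleted but is completed in effect since $V$ is positive, preserve filtration levels. I would settle this by noting that the completion of $M_X$ changes nothing degreewise, so the inclusion is filtration-preserving for the bracket-length filtrations on both sides.
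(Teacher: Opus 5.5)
Your proposal is correct and follows essentially the same route as the paper. Like the paper, you show that $N_X$ is a positive closed differential ideal with $L=\Q a\oplus N_X$ and that $\ad_a$ raises bracket length, apply Lemma~\ref{lem:split}, and obtain the lift from Lemma~\ref{lem:model} together with the factorization $M_X\hookrightarrow N_X\hookrightarrow L$; the one addition is your explicit check that $F^rL\subseteq N_X$ for $r\geq2$ (using $[a,a]=0$), which matches the split-extension filtration convention and which the paper leaves implicit.
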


\begin{proof}
The kernel $N_X$ is the closed differential Lie ideal consisting of
convergent series of bracket words that contain at least one letter
from $V$.  Since $V$ is positive and $|a|=0$, it is positive.  Every
element of $L$ has a unique expression $qa+x$ with $q\in\Q$ and
$x\in N_X$, and the inner derivation $D=\ad_a$ preserves $N_X$ and
commutes with the differential.  With the filtration induced by bracket
length, applying $D$ adds one occurrence of $a$, and hence
$D(F^rN_X)\subseteq F^{r+1}N_X$.  Thus
$L=\Q a\ltimes_DN_X$ satisfies the hypotheses of
Lemma~\ref{lem:split}.

That lemma gives a weak equivalence over $B\Gamma=K(\Gamma,1)$,
\[
 \real L\simeq E\Gamma\times_\Gamma\real{N_X},
\]
with monodromy $q\mapsto\real{e^{qD}}$, and identifies
$\real{N_X}$ with the universal cover of $\real L$.
Lemma~\ref{lem:model} identifies $\real L$ with
$\comp{(S^1\vee X)}$ and $\real{M_X}$ with $\comp X$, compatibly with
the wedge inclusion.  Since the inclusion $M_X\hookrightarrow L$
factors through $N_X$, the final assertion follows as well.
\end{proof}

\section{A chain model for the homology of the completion}
\label{sec:chain-model}
We now pass from this geometric description to chains. Throughout this
section, $M$, $D$, $L_D$, and $\Gamma$ are as in
Lemma~\ref{lem:split}: $M$ is a positive complete dgl, and $D$ is a
degree-zero chain derivation satisfying~\eqref{eq:raising}.


For a positive dgl $M$, let $sM$ denote its suspension,
$(sM)_q=M_{q-1}$, and let
\[
 \CE(M)=\bigl(S^c(sM),\partial\bigr),\qquad
 S^c(sM)=\bigoplus_{k\geq0}\Sym^k_{\mathrm{gr}}(sM),
\]
be its Chevalley--Eilenberg chain coalgebra. The coproduct is the
graded unshuffle coproduct. The differential is the coderivation whose
projection to $sM$ is given by
\begin{equation}\label{eq:CE}
 sx\longmapsto-s(dx),\qquad
 (sx)(sy)\longmapsto(-1)^{|x|}s[x,y],
\end{equation}
and is zero on all other weights. In particular,
$(sx)(sy)=(-1)^{(|x|+1)(|y|+1)}(sy)(sx)$.
This is the ordinary, uncompleted chain coalgebra: each chain is a
finite sum of finite products, although its entries may be complete
Lie series.

The number of suspended factors is the \emph{weight} of a monomial,
whereas its homological degree is
\[
 \bigl|(sx_1)\cdots(sx_k)\bigr|
   =\sum_{i=1}^k(|x_i|+1).
\]
The two parts of \eqref{eq:CE} encode the internal differential and the
Lie bracket. In particular, even when $dx=dy=0$, the product
$(sx)(sy)$ need not be a Chevalley--Eilenberg cycle: its boundary is the
suspended bracket $(-1)^{|x|}s[x,y]$.

The automorphisms $T_q$ act diagonally on $\CE(M)$.
Use the normalized group bar complex and the total differential
\begin{equation}\label{eq:total}
 B_{p,k}(M)=C_p(\Gamma;\CE(M)_k),\qquad
 \partial_{\mathrm{tot}}=\partial_{\mathrm{bar}}+(-1)^p\partial,
\end{equation}
where $\partial_{\mathrm{bar}}([q]\otimes w)=(T_q-1)w$.
The index $p$ is the bar degree and $k$ is the
Chevalley--Eilenberg degree; total degree is their sum. A bar one-chain
therefore provides a boundary measuring the difference between a chain
and its translate by monodromy. In Section~\ref{sec:lifts}, we use the next lemma to construct a
continuum of cycles in the total complex~\eqref{eq:total}.

\begin{lemma}[Equivariant chain model]\label{lem:chains}
There is a natural isomorphism
\begin{equation}\label{eq:chain-model}
 H_*(\real{L_D})\cong H_*\bigl(\Tot B_{*,*}(M)\bigr).
\end{equation}
It is compatible with morphisms of split extensions that are the
identity on $\Q a$, and with the inclusions of the fibers into their
Borel constructions.
\end{lemma}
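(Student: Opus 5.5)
We prove \eqref{eq:chain-model} in three stages: first pass to the Borel construction, then compute the homology of the fiber, and finally assemble the two via the bar construction. By Lemma~\ref{lem:split} there is a natural weak equivalence $\real{L_D}\simeq E\Gamma\times_\Gamma\real M$. The Borel construction in the inhomogeneous simplicial model is the diagonal of the bisimplicial set $[p]\mapsto\Gamma^p\times\real M$, with faces twisted by the action $q\cdot x=\real{e^{qD}}(x)$. By Eilenberg--Zilber, its rational normalized chains are quasi-isomorphic to the total complex of the double complex
\[
 C_p\bigl(\Gamma;C_*(\real M;\Q)\bigr),
\]
the normalized bar complex of $\Gamma$ with coefficients in the simplicial chains of the fiber. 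Its bar differential is $[q]\otimes c\mapsto(T_q)_*c-c$ in bar degree one, with the usual inhomogeneous formula in general. This is just the homotopy-orbit chain model, $C_*(E\Gamma\times_\Gamma F)\simeq C_*(\Gamma;C_*(F))$.

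The second step replaces $C_*(\real M;\Q)$ by $\CE(M)$ by a $\Gamma$-equivariant natural quasi-isomorphism, or a natural zigzag of them. Since $M$ is positive, $\real M$ is simply connected. For such $M$, the complete Quillen theory of \cite{LieModels} gives a natural quasi-isomorphism of dg coalgebras between $\CE(M)$ and the rational chains on $\real M$. One route goes through the Maurer--Cartan model $\MC_\bullet(M)$ of Lemma~\ref{lem:mc-comparison}: the natural cochain map $C^*_{\mathrm{CE}}(M)\to A_{\mathrm{PL}}(\MC_\bullet(M))$ is obtained by evaluating MC elements. For a positive $M$ of finite type, the dual statement that $\real M\simeq\langle\CE(M)^\vee\rangle$ is standard. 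Because every construction is natural in filtration-preserving dgl morphisms, each $T_q=e^{qD}$ acts compatibly at every stage, so the zigzag consists of $\Gamma$-equivariant chain maps. A levelwise quasi-isomorphism of $\Gamma$-modules induces a quasi-isomorphism on the bar double complexes; the column filtration gives a convergent spectral sequence, since $p\ge0$ and the complexes are nonnegatively graded. Hence $H_*$ of $C_*(\Gamma;C_*(\real M))$ agrees with $H_*(\Tot B_{*,*}(M))$. The sign $(-1)^p\partial$ in \eqref{eq:total} is the standard sign making the two differentials anticommute.

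Naturality is then automatic. A morphism of split extensions that is the identity on $\Q a$ restricts to a filtered dgl map $M\to M'$ intertwining $D$ and $D'$, hence $T_q$ and $T'_q$. Each of the three steps is natural for such maps, including the equivalence of Lemma~\ref{lem:split}, whose gauge coordinates are natural. The fiber inclusion corresponds, in the bar model, to the inclusion of the $p=0$ column $\CE(M)\hookrightarrow\Tot B$, compatibly with $\real M\to E\Gamma\times_\Gamma\real M$.

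The main obstacle is the second step: producing a natural, equivariant quasi-isomorphism between $\CE(M)$, which is uncompleted, and $C_*(\real M;\Q)$ for a \emph{complete} positive $M$ that need not be of finite type; $N_X$ is typically uncountable-dimensional in each degree. I would first try to reduce to the finite-type case, writing $M=\varprojlim M/F^r$ with $D$ descending to each quotient by \eqref{eq:raising}. However, homology does not commute with inverse limits, so instead I would argue with a filtered colimit of finite-type sub-dgls where possible. If that fails, I would invoke directly the quasi-isomorphism $\CE(M)\simeq C_*(\real M)$ for simply connected complete dgls from \cite{LieModels}, checking only that its construction is functorial in filtered morphisms.
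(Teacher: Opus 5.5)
Your first and third steps are the paper's own: the Borel model of Lemma~\ref{lem:split}, Eilenberg--Zilber for the bisimplicial set $\Gamma^p\times\real M$, the column-filtration comparison for $\Tot C_*(\Gamma;-)$, and the identification of the fiber inclusion with the $p=0$ column are exactly Lemma~\ref{lem:equivariant-bar-comparison}. The gap is the second step, which is where the content of the lemma lies. You assert that ``complete Quillen theory'' gives a natural quasi-isomorphism $\CE(M)\simeq C_*(\real M;\Q)$. But you do not name a result that holds for a complete positive dgl of infinite type, and none of your fallback routes produces one.

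\begin{itemize}
\item The cochain route through $A_{\mathrm{PL}}(\MC_\bullet(M))$ and $\langle\CE(M)^\vee\rangle$ is, as you note yourself, a finite-type argument.
\item The inverse-limit reduction fails, as you say.
\item A filtered colimit of finite-type sub-dgls could at most verify that an already constructed natural map is a quasi-isomorphism. Such sub-dgls are not $T_q$-stable: for $N_X$ the translates $T_qb=\sum_j q^jb_j/j!$, $q\in\Q$, span an infinite-dimensional subspace of a single degree. You never construct that map, nor supply the finite-type comparison it would reduce to.
\end{itemize}

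As written, the essential step is an unproved citation.

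The missing idea is to build the zigzag from the adjunction \eqref{eq:reduced-adjunction} itself, which needs no finiteness.

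\begin{itemize}
\item Since $M$ is positive, its lower central filtration satisfies $(\gamma^rM)_j=0$ for $r>j$, so $(M,\gamma)$ is complete. As $\gamma^rM\subseteq F^rM$, the identity $(M,\gamma)\to(M,F)$ is a filtered quasi-isomorphism, hence a weak equivalence on realizations by \eqref{eq:homotopy}. It commutes with each $T_q$ because automorphisms preserve $\gamma$.
\item Put $K=\real{(M,\gamma)}$. It has one vertex and no nondegenerate edges, so $R_K=\mathfrak L^a(K)=(\LL(W_K),d)$, with $W_K=s^{-1}\widetilde C_*(K;\Q)$, is an ordinary free positive dgl.
\item The counit $\varepsilon_M:R_K\to(M,\gamma)$ is a quasi-isomorphism. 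Indeed, $K$ is simply connected with rational homotopy groups, so its $\Q$-completion is a weak equivalence, and by \eqref{eq:completion} that completion is the unit $\eta_K$. The triangle identity $\real{\varepsilon_M}\circ\eta_K=\mathrm{id}_K$ together with \eqref{eq:homotopy} then gives the claim.
\item Projecting $\CE(R_K)$ onto weights at most one and then onto indecomposables gives a natural quasi-isomorphism $p_K:\CE(R_K)\to\Q\oplus sW_K\cong C_*(K;\Q)$ (free-Lie projection).
\item $\CE(\varepsilon_M)$ is a quasi-isomorphism by the weight filtration.
\end{itemize}

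The resulting zigzag $C_*(\real M;\Q)\leftarrow C_*(K;\Q)\leftarrow\CE(R_K)\to\CE(M)$ consists of functorial maps, hence of $\Q[\Gamma]$-maps. This is exactly the input your third step requires.
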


\begin{proof}
We construct the chain comparison using the reduced model in
\eqref{eq:reduced-adjunction}. First let $\gamma$ be the lower central
filtration of $M$. Positivity gives
$(\gamma^rM)_j=0$ for $r>j$, so $M$ is also complete for $\gamma$.
Since $\gamma^rM\subseteq F^rM$, the identity
$\iota:(M,\gamma)\to(M,F)$ is a filtered quasi-isomorphism and upon realization
it becomes a weak equivalence. It commutes with every $T_q$ (see Lemma~\ref{lem:exponential-action}). Write
$K=\real{(M,\gamma)}$. Since $M$ is positive, $K$ has a unique vertex
and no nondegenerate edges. Consequently its reduced model is the
ordinary positive free dgl
\[
 R_K=\mathfrak L^a(K)=(\LL(W_K),d),\qquad
 W_K=s^{-1}\widetilde C_*(K;\Q).
\]
Completion changes no homogeneous degree here. The linear part $d_0$
of $d$ is the desuspended chain differential.

The adjunction counit $\varepsilon_M:R_K\to(M,\gamma)$ is a natural
quasi-isomorphism. To see this without a finite-type hypothesis, note
that $K$ is simply connected and all its homotopy groups are rational
vector spaces, by \eqref{eq:homotopy}. Its $\Q$-completion map is therefore a weak equivalence. By \eqref{eq:completion}, this map is the
adjunction unit $\eta_K:K\to\real{R_K}$. The triangle identity
\[K\xrightarrow{\eta_K}\real{R_K}
\xrightarrow{\real{\varepsilon_M}}K
=\mathrm{id}_K\]
and the two-out-of-three property show that
$\real{\varepsilon_M}$ is a weak equivalence, and
\eqref{eq:homotopy} implies that $\varepsilon_M$ is a quasi-isomorphism.

There is a natural chain projection
\[
 p_K:\CE(R_K)\longrightarrow
 \Q\oplus s(R_K/[R_K,R_K])
 \cong C_*(K;\Q).
\]
Indeed, let
\[
 \pi:R_K\longrightarrow R_K/[R_K,R_K]
\]
be the projection onto Lie indecomposables. Since
\[
 R_K=(\LL(W_K),d)
\]
is free as a graded Lie algebra, restriction to the generators gives
\[
 R_K/[R_K,R_K]\cong W_K.
\]
Writing \(d=d_0+d_{\geq2}\), the decomposable part \(d_{\geq2}\)
vanishes on indecomposables, so the induced differential on the
quotient is \(d_0\). Hence
\[
 \Q\oplus s(R_K/[R_K,R_K])
 \cong \Q\oplus(sW_K,-sd_0)
 \cong C_*(K;\Q),
\]
because \(W_K=s^{-1}\widetilde C_*(K;\Q)\) and
\[
 d_0(s^{-1}c)=-s^{-1}\partial c.
\]

Define \(p_K\) as the composite
\[
 \CE(R_K)\longrightarrow\Q\oplus sR_K
 \xrightarrow{\operatorname{id}_{\Q}\oplus s\pi}
 \Q\oplus s(R_K/[R_K,R_K]),
\]
where the first map is projection onto CE weights zero and one.
Equivalently,
\[
 p_K(1)=[v_0],\qquad
 p_K(sx)=s\pi(x),\qquad
 p_K\bigl(\Sym_{\mathrm{gr}}^{\geq2}(sR_K)\bigr)=0.
\]
In particular, for \(c\in\widetilde C_*(K;\Q)\),
\[
 p_K\bigl(s(s^{-1}c)\bigr)=c.
\]

The map \(p_K\) commutes with the differentials. On CE weight one,
\[
 p_K\partial(sx)
 =-s\pi(dx)
 =-sd_0\pi(x)
 =\partial p_K(sx).
\]
On higher weights, the internal part of the CE differential preserves
weight, while the bracket part lowers it by one. Thus only weight two
could produce a term of weight one, but that term is a multiple of
\(s[x,y]\), which is killed because
\(\pi([x,y])=0\). Hence
\[
 p_K\partial=\partial p_K.
\]

The construction is natural in \(K\). By the free-Lie projection
theorem \cite[Lemma~2.2(ii)]{LieModels}, the corresponding projection
onto \((sW_K,-sd_0)\) is a quasi-isomorphism. Adjoining the
weight-zero copy of \(\Q\) therefore gives the natural
quasi-isomorphism
\[
 p_K:\CE(R_K)\xrightarrow{\simeq}C_*(K;\Q).
\]


Finally, applying $\CE$ to $\varepsilon_M$ preserves its
quasi-isomorphism. The increasing CE-weight filtration has graded
pieces $\Sym^k_{\mathrm{gr}}(sR_K)$ and
$\Sym^k_{\mathrm{gr}}(sM)$; symmetric powers preserve
quasi-isomorphisms over $\Q$, and positivity bounds the weights in
each degree. We therefore obtain the explicit natural zigzag
\begin{equation}\label{eq:chain-zigzag}
 C_*(\real M;\Q)
 \xleftarrow[\simeq]{C_*(\real\iota)} C_*(K;\Q)
 \xleftarrow[\simeq]{p_K} \CE(R_K)
 \xrightarrow[\simeq]{\CE(\varepsilon_M)} \CE(M).
\end{equation}
All complexes are ordinary nonnegative chain complexes, with their
degree-zero copy of $\Q$ included. The reduced model, its counit,
and the indecomposable projection are functorial. Every arrow therefore
commutes with $T_q$ and with each filtered morphism intertwining the
derivations. In particular, \eqref{eq:chain-zigzag} is a zigzag of
$\Q[\Gamma]$-complexes.

By Lemmas~\ref{lem:split} and
\ref{lem:equivariant-bar-comparison}, there is a natural
quasi-isomorphism
\[
 C_*(\real{L_D};\Q)
 \simeq
 \Tot C_*\bigl(\Gamma;C_*(\real M;\Q)\bigr).
\]
Applying the total group-bar construction
\(\Tot C_*(\Gamma;-)\) to the equivariant zigzag
\eqref{eq:chain-zigzag} preserves quasi-isomorphisms by
Lemma~\ref{lem:equivariant-bar-comparison}. Hence
\[
 C_*(\real{L_D};\Q)
 \simeq
 \Tot C_*\bigl(\Gamma;\CE(M)\bigr)
 =
 \Tot B_{*,*}(M).
\]
Taking homology proves \eqref{eq:chain-model}. Naturality and
compatibility with the fiber inclusions follow from the corresponding
assertions in Lemmas~\ref{lem:split} and
\ref{lem:equivariant-bar-comparison}.
\end{proof}

\section{The power-series quotient}
\label{sec:detector}


Fix $m\geq2$ and put $P=\Q[[t]]$. Regard $P[m-1]$ as an abelian dgl
with zero differential, and define
\begin{equation}\label{eq:target}
 G_m=\Q a\ltimes P[m-1],\qquad [a,f]=tf,
 \qquad F^rP=t^{r-1}P\quad(r\geq1).
\end{equation}
The filtration on $G_m$ is the split filtration of
Section~\ref{sec:chains}. By Lemma~\ref{lem:split}, its realization is
\begin{equation}\label{eq:target-borel}
 \real{G_m}\simeq E\Gamma\times_\Gamma K(P,m),\qquad
 q\cdot f=e^{qt}f.
\end{equation}
The first quadratic products in the fiber occur in degree $2m$.
Their symmetry is determined by the degree $m$ of each factor, which
is one greater than the Lie degree $m-1$.
For a vector space $W$, write
\[
 A_m^2(W)=
 \begin{cases}
  \Sym^2(W),&m\text{ even},\\
  \Lambda^2(W),&m\text{ odd},
 \end{cases}
\]
and denote its product by $f\star g$.
For a $\Gamma$-module $A$, write
\[
 A_\Gamma=A/\operatorname{span}_{\Q}
       \{q\cdot x-x:q\in\Gamma,\ x\in A\}
\]
for its coinvariants. These are the relations imposed on fiber homology
by transport around the base.
For the abelian dgl $P[m-1]$, we use the fiber-chain comparison
\eqref{eq:chain-zigzag} to identify its degree-$2m$ homology with
$A_m^2(P)$: the CE monomial $(sf)(sg)$ corresponds to $f\star g$.
This fixes the normalization used in the following lemma and in the
lifting proposition.

\begin{lemma}\label{lem:target-homology}
The fiber inclusion in \eqref{eq:target-borel} induces an isomorphism
\begin{equation}\label{eq:target-homology}
 A_m^2(P)_\Gamma\xrightarrow{\ \cong\ }H_{2m}(\real{G_m}).
\end{equation}
\end{lemma}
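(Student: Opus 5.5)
We compute $H_{2m}(\real{G_m})$ with the Serre (equivalently Lyndon--Hochschild--Serre) spectral sequence of the Borel fibration \eqref{eq:target-borel},
\[
 E^2_{p,k}=H_p\bigl(\Gamma;H_k(K(P,m))\bigr)\Longrightarrow H_{p+k}(\real{G_m}).
\]
Equivalently, we filter the total complex $\Tot B_{*,*}(P[m-1])$ of Lemma~\ref{lem:chains} by bar degree. For the abelian dgl $P[m-1]$ with zero differential, the CE differential vanishes. Hence $\CE(P[m-1])=S^c(sP[m-1])$ computes the fiber homology exactly, with $\Gamma$ acting diagonally by $e^{qt}$. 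In total degree $2m$ the possible contributions are $H_0(\Gamma;A^2_m(P))=A^2_m(P)_\Gamma$ and $H_p(\Gamma;H_{2m-p}(K(P,m)))$ for $p\ge1$. The fiber homology vanishes in degrees $0<k<m$ and in degrees $m<k<2m$; in degree $m$ it is $P$, and in degree $0$ it is $\Q$. Since $\Gamma=\Q$ has rational homological dimension one, only $p\le1$ can contribute. For $m\ge2$ the only other candidate in total degree $2m$ is therefore $H_1(\Gamma;H_{2m-1})$, which is zero because $2m-1\ne m$. The candidates in total degree $2m+1$ that could support differentials into $E_{0,2m}$ are either zero or lie in bar degree $1$, and $d^r$ for $r\ge2$ starts in bar degree $\ge2$, where everything vanishes. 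So the spectral sequence collapses in the relevant range, and the edge map, which is the fiber inclusion, gives an isomorphism $A^2_m(P)_\Gamma\cong H_{2m}$.

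To make this precise, and compatible with the normalization $(sf)(sg)\leftrightarrow f\star g$, I would work directly with the bicomplex \eqref{eq:total}. Since $\partial=0$, the total homology is $\bigoplus_k H_*(\Gamma;\Sym^k_{\mathrm{gr}}(sP[m-1]))$. For a $\Q[\Gamma]$-module $A$ I would use $H_p(\Gamma;A)=0$ for $p\ge2$. This holds because $\Q[\Gamma]$ is a filtered colimit of Laurent polynomial rings $\Q[\tfrac1N\Z]$, group homology commutes with filtered colimits of groups, and each $\tfrac1N\Z$ has cohomological dimension one. In degree $2m$ the surviving summands are then $H_0(\Gamma;\Sym^2)$ and $H_1(\Gamma;(\text{weight }k\text{ part in degree }2m-1))$. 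The latter weight piece is zero, since weights $\ge2$ start in degree $2m$ and weight one sits in degree $m<2m-1$. The weight-$2$ piece in degree $2m$ is $A^2_m(P)$, with the graded sign giving $\Sym^2$ or $\Lambda^2$ according to the parity of $m$. Weights $k\ge3$ begin in degree $km>2m$. Hence $H_{2m}=H_0(\Gamma;A^2_m(P))=A^2_m(P)_\Gamma$.

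Naturality in Lemma~\ref{lem:chains} with respect to the fiber inclusion identifies this isomorphism with the map induced by $K(P,m)\to\real{G_m}$, composed with the identification of fiber homology fixed before the lemma. The main obstacle is the vanishing of $H_{\ge2}(\Gamma;-)$ for the non-finitely generated group $\Q$. I would handle it via the colimit argument above, or alternatively by noting that $\Q$ has a rational two-term flat resolution coming from the telescope $\Q=\varinjlim\Z$.
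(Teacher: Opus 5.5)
Your proposal is correct and follows essentially the paper's proof. Both use the Serre spectral sequence of the Borel fibration, the vanishing of $H_{\ge 2}(\Gamma;-)$ via the filtered union of infinite cyclic subgroups, and $H_{2m-1}(K(P,m))=0$; together these leave only $E^2_{0,2m}=A^2_m(P)_\Gamma$, with no differentials entering or leaving it. Your bicomplex variant through Lemma~\ref{lem:chains} is a harmless refinement: it reads the fiber homology directly off the Chevalley--Eilenberg complex with zero differential, rather than computing $H_*(K(P,m))$ by passing to finite-dimensional subspaces of $P$.
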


\begin{proof}
For every $\Q[\Gamma]$-module $A$, one has $H_p(\Gamma;A)=0$ for
$p\geq2$. Indeed, $\Gamma=\bigcup_{r\geq1}(1/r!)\Z$, its bar complex
is the filtered union of the bar complexes of these cyclic subgroups,
and filtered colimits are exact. Each infinite cyclic subgroup has
homological dimension one.

The rational homology of $K(P,m)$ is the free graded-commutative
algebra on $P$ in degree $m$. This statement holds for arbitrary $P$
by passage to the filtered union of its finite-dimensional subspaces.
Thus its groups in degrees $2m$ and $2m-1$ are $A_m^2(P)$ and zero,
respectively. In the Serre spectral sequence of
\eqref{eq:target-borel}, the only nonzero term of total degree $2m$
is consequently $E^2_{0,2m}=A_m^2(P)_\Gamma$, and no differential
can enter or leave it.
\end{proof}


We focus on classes represented by \(f\star1\). Replacing \(t\) by \(-t\) sends \(e^{qt}\) to its inverse, which lets us encode the coinvariant relations in a balanced tensor product. The detector applies to every power series \(f\); we restrict to the subspace \(U_m\) only because its elements can be lifted by the construction in Section~\(\ref{sec:lifts}\).

\Needspace{10\baselineskip}
\begin{lemma}[Power-series detector]\label{lem:detector}
The map
\[
 \theta_m:P\longrightarrow A_m^2(P)_\Gamma,
 \qquad f\longmapsto[f\star1],
\]
has a kernel of at most countable dimension. Its restriction to
\begin{equation}\label{eq:parity}
 U_m=\{f\in tP:f(-t)=(-1)^{m+1}f(t)\}
 =\begin{cases}
   t\Q[[t^2]],&m\text{ even},\\
   t^2\Q[[t^2]],&m\text{ odd}
  \end{cases}
\end{equation}
has image of dimension $\cont$.
\end{lemma}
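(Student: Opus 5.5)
The plan is to reduce everything to a statement about the \emph{algebraic} tensor product $P\otimes P$ and then use a size count. The second assertion follows from the first. $U_m$ is a subspace of $P$ of dimension $\cont$, since it is isomorphic to $\Q[[t^2]]$, which has dimension $\cont$. If $\ker\theta_m$ has at most countable dimension, then $\ker(\theta_m|_{U_m})$ does too. Hence $\theta_m(U_m)\cong U_m/(U_m\cap\ker\theta_m)$ has dimension $\cont$, because a vector space of dimension $\cont$ modulo a countable-dimensional subspace still has dimension $\cont$. The upper bound $\cont$ is automatic, since $|P|=\cont$. So the real content is the countability of $\ker\theta_m$, and this is where I would concentrate.

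\emph{Setting up.} Identify $P\otimes P$ with the subspace of $\Q[[s,u]]$ of \emph{finite-rank} series $\sum_{i=1}^N f_i(s)g_i(u)$. On this subspace the diagonal $\Gamma$-action is multiplication by $e^{q(s+u)}$. Write $A^2_m(P)$ as the symmetric (for $m$ even) or antisymmetric (for $m$ odd) part under $s\leftrightarrow u$. Then $f\star1$ corresponds to $\tfrac12\bigl(f(s)\pm f(u)\bigr)$, with sign $+$ for $m$ even and $-$ for $m$ odd. The coinvariant relations span
\[
 E=\operatorname{span}_\Q\bigl\{(e^{q(s+u)}-1)h(s,u):q\in\Q,\ h\text{ of finite rank}\bigr\}.
\]
Thus $f\in\ker\theta_m$ if and only if $f(s)\pm f(u)$ lies in $E$, after symmetrizing, which is harmless over $\Q$. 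Such a relation involves only finitely many $q_j$. These span a finitely generated subgroup $\tfrac1{N}\Z\subset\Gamma$, so every factor $e^{q_j(s+u)}-1$ lies in the ideal generated by $e^{c(s+u)}-1$ in the Laurent ring $\Q[e^{\pm c(s+u)}]$, for a single $c=1/N$. Hence
\[
 f(s)\pm f(u)=\bigl(e^{c(s+u)}-1\bigr)H(s,u),\qquad H\text{ of finite rank}.
\]

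\emph{Key step, and the main obstacle.} I must show that, for fixed $c$, the set of $f$ admitting such a factorization is contained in a countable-dimensional subspace. Taking the union over $c\in\{1/N\}$ then gives countable dimension. The route I would try first is the following.
\begin{enumerate}
 \item Apply $\partial_s\partial_u$. This kills the left side, so $\partial_s\partial_u\bigl[(e^{cx}-1)H\bigr]=0$ with $x=s+u$.
 \item Expanding gives the functional equation
 \[
 c^2e^{cx}H+ce^{cx}(H_s+H_u)+(e^{cx}-1)H_{su}=0
 \]
 for the finite-rank series $H=\sum_{i=1}^N\phi_i(s)\psi_i(u)$.
 \item Finite rank means that the span $W$ of the $\psi_i$, and the span of their derivatives entering the equation, are finite-dimensional. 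Comparing coefficients of the $\phi_i(s)$ should force $W$ to be stable under $\partial_u$ up to multiplication by exponentials $e^{cu}$. This would show that the $\psi_i$ satisfy a linear ODE with constant coefficients in the operators $\partial_u$ and $e^{cu}$.
 \item I would then aim to conclude that $f$ is an \emph{exponential polynomial}, that is, a finite $\overline\Q$-combination of terms $u^ke^{\lambda u}$ in the relevant rational directions, defined over $\Q$.
\end{enumerate}
Such $f$ are determined by finitely many algebraic parameters, so they form a countable-dimensional space. This mirrors the ``finite rank versus generic power series'' mechanism of \cite[Proposition~2.1]{IM}. If the ODE step is too messy, a fallback is a coefficient argument: write $H$ via its coefficient matrix, which has finite rank. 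Express the condition $f(s)\pm f(u)=(e^{cx}-1)H$ as the statement that the Hankel-type matrix built from the Taylor coefficients of $f$ against the expansion of $(e^{cx}-1)^{-1}$ has finite rank. Finite-rank Hankel matrices correspond to rational generating functions, and there are only countably many of those. I expect this step, turning finite rank of $H$ into a countable constraint on $f$, to be the technical heart of the proof.
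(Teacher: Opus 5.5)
\textbf{There is a genuine gap: the key step is left open, and the route you sketch for it ends in a false statement.} Your preliminary reductions are fine: finite-rank series in $\Q[[s,u]]$, the span $E$ of coinvariant relations, and the passage to a single $c=1/N$. Deducing the $U_m$ statement from countability of $\ker\theta_m$ is exactly how the paper finishes.

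The problem is the endpoint you aim for. The kernel $\ker\theta_m$ is not contained in the exponential polynomials. For $m$ odd take $f(t)=1/(5-4\cosh t)\in P$. Since $\cosh s-\cosh u=\tfrac12(e^{s+u}-1)(e^{-u}-e^{-s})$, we get
\[
 f(s)-f(u)=\bigl(e^{s+u}-1\bigr)\,
 \frac{2\,(e^{-u}-e^{-s})}{(5-4\cosh s)(5-4\cosh u)}.
\]
The second factor has rank at most two, so $[f\star1]=0$. Yet $f$ has poles at $t=\pm\log 2$, so it is not an exponential polynomial. For $m$ even, $\sinh t/(5-4\cosh t)$ plays the same role.

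So steps 3--4 cannot be completed as planned. The Hankel fallback does not rescue this. It is not a reformulation of the hypothesis, since $H$ is not a function of $s+u$ and $(e^{cx}-1)^{-1}$ is not a power series. Any ``rational generating function'' criterion would also have to accommodate this $f$, which has infinitely many poles and so is not rational in $t$. As it stands, nothing in the proposal turns finite rank of $H$ into a countable constraint on $f$.

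\textbf{The missing idea, which is the paper's, is to twist one tensor factor by $\sigma(g)(t)=g(-t)$}, that is, substitute $u\mapsto -u$. The relation factor $e^{q(s+u)}-1$ then becomes $e^{q(s-u)}-1$. This just says that $e^{qt}$ can be moved across the tensor sign.

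Concretely, let $\F=\Q((t))$ and let $\K=\Frac\bigl(\Q[e^{qt}:q\in\Q]\bigr)$, a countable field. The rule $[f\star g]\mapsto f\otimes\sigma(g)+(-1)^m g\otimes\sigma(f)$ is then a well-defined map $A_m^2(P)_\Gamma\to\F\otimes_\K\F$. It sends $\theta_m(f)$ to $f\otimes 1+(-1)^m 1\otimes\sigma(f)$.

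If $f\notin\K$, pick a $\K$-linear map $\ell:\F\to\K$ with $\ell(1)=0$ and $\ell(f)=1$. Applying $\ell\otimes\mathrm{id}$ to that tensor gives $1\neq0$. Hence $\ker\theta_m\subseteq\K$, which is countable.

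This argument needs no reduction to a single $c$ and no analysis. It also shows that the right countable class is rational functions of exponentials, which contains $1/(5-4\cosh t)$, rather than exponential polynomials.
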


\begin{proof}
Let $\F=\Q((t))$, let $\sigma(f)(t)=f(-t)$, and set
\[
 \K=\Frac\bigl(\Q[e^{qt}:q\in\Q]\bigr)\subseteq\F.
\]
The field $\K$ is countable and is preserved by $\sigma$. There is a well-defined
linear map
\begin{equation}\label{eq:tensor-detector}
 A_m^2(P)_\Gamma\longrightarrow\F\otimes_{\K}\F,
 \qquad
 [f\star g]\longmapsto
 f\otimes\sigma(g)+(-1)^m g\otimes\sigma(f).
\end{equation}
The symmetry relation is immediate, and the diagonal action is killed
because $e^{qt}\in\K$ and $\sigma(e^{qt})=e^{-qt}$, so~\eqref{eq:tensor-detector} is correctly defined.
The image of $\theta_m(f)$ is
\[
 f\otimes1+(-1)^m1\otimes\sigma(f).
\]
If \(f\notin\K\), then \(1\) and \(f\) are linearly independent over \(\K\). Define a \(\K\)-linear functional on their span by
\[
\ell(1)=0,\qquad \ell(f)=1,
\]and extend it to a \(\K\)-linear functional \(\ell:\F\to\K\). The induced map
\[
\ell\otimes_{\K}\mathrm{id}:
\F\otimes_{\K}\F\longrightarrow
\K\otimes_{\K}\F\cong\F
\]sends
\[
f\otimes1+(-1)^m1\otimes\sigma(f)
\]to
\[
\ell(f)\cdot1+(-1)^m\ell(1)\sigma(f)=1.
\]Hence this tensor is nonzero. Since it is the image of
\(\theta_m(f)\) under the detector map, \(\theta_m(f)\neq0\).
Therefore
\[
\ker\theta_m\subseteq\K.
\]

The space $U_m$ is isomorphic to $\Q[[t]]$, whose cardinality and
dimension over the countable field $\Q$ are both $\cont$.
Quotienting by a space of dimension at most $\aleph_0$ leaves dimension
$\cont$. This proves the second assertion.
\end{proof}

\section{Lifting the detected classes}
\label{sec:lifts}

We now construct cycles in the total complex
$\Tot B_{*,*}(M)$ of~\eqref{eq:total} whose images under the chain map
induced by $\Phi$ in Proposition~\ref{prop:lifting} represent the classes
detected in Lemma~\ref{lem:detector}.

\begin{proposition}[Lifting lemma]\label{prop:lifting}
Let $M$, $D$, and $L_D$ be as in Lemma~\ref{lem:split}, and let
$m\geq2$. Suppose there is a filtered dgl morphism
$\psi:M\to P[m-1]$ satisfying $\psi D=t\psi$, and a cycle
$b\in M_{m-1}$ with $\psi(b)=1$.
Then the split morphism
\[
 \Phi=\mathrm{id}\ltimes\psi:L_D\longrightarrow G_m
\]
induces, after Lie realization~\eqref{eq:Lie_realization}, a map on $H_{2m}$ whose image has
dimension $\cont$.
More precisely, its image contains $\theta_m(U_m)$ under
\eqref{eq:target-homology}.
\end{proposition}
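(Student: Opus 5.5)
The plan is to build, for each $f\in U_m$, an explicit cycle $z_f$ of total degree $2m$ in $\Tot B_{*,*}(M)$ of \eqref{eq:total}, and to check that $\Phi$ carries its class to $\theta_m(f)$ under \eqref{eq:target-homology}. Lemma~\ref{lem:chains} identifies $H_{2m}(\real{L_D})$ and $H_{2m}(\real{G_m})$ with the total homologies, naturally in split morphisms that are the identity on $\Q a$. So it suffices to work with the chain map $\Tot B(M)\to\Tot B(P[m-1])$ induced by $\psi$. Then Lemma~\ref{lem:detector} gives that $\theta_m(U_m)$, and hence the image, has dimension $\cont$.

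\emph{Step 1: the Lie series.} Write $f=\sum_j c_jt^j$ and set $x_f=\sum_j c_jD^jb\in M_{m-1}$. This series converges because $D$ raises filtration by \eqref{eq:raising}. Since $db=0$ and $D$ commutes with $d$, $x_f$ is a cycle, and $\psi D=t\psi$ gives $\psi(x_f)=f$.

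\emph{Step 2: the CE boundary.} The weight-two chain $(sx_f)(sb)$ has CE boundary $\pm s[x_f,b]$. To cancel it by a bar one-chain $[1]\otimes sy$, whose bar boundary is $(T_1-1)sy$, I need $[x_f,b]=(e^D-1)y$ up to sign and up to a $d$-boundary absorbed by a weight-one correction.

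\emph{Step 3: telescoping.} Since $D$ is a derivation,
\[
 D\Bigl(\sum_{i=0}^{j-1}(-1)^i[D^{j-1-i}b,D^ib]\Bigr)=[D^jb,b]+(-1)^{j-1}[b,D^jb].
\]
By graded antisymmetry with $|b|=m-1$, the right side equals $\bigl(1+(-1)^{j+m}\bigr)[D^jb,b]$. This is $2[D^jb,b]$ exactly when $j\equiv m\pmod 2$, and the parity condition \eqref{eq:parity} defining $U_m$ says precisely that only such $j$ occur. Summing with coefficients $c_j/2$ gives $[x_f,b]=Du_f$ for a convergent series $u_f$, with $du_f=0$ since all terms are brackets of cycles.

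\emph{Step 4: Bernoulli correction.} Write $D=(e^D-1)\beta(D)$, where $\beta(z)=z/(e^z-1)$ is the Bernoulli series. Because $D$ raises filtration, $y_f=\beta(D)u_f$ converges, and $[x_f,b]=(T_1-1)y_f$. Set
\[
 z_f=(sx_f)(sb)\pm[1]\otimes sy_f,
\]
with the sign fixed by $\partial_{\mathrm{tot}}=\partial_{\mathrm{bar}}+(-1)^p\partial$. Then $z_f$ is a cycle: the bar boundary cancels the bracket term, and $\partial(sy_f)=-s(dy_f)=0$ because $\beta(D)$ commutes with $d$.

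\emph{Step 5: identifying the image class.} $\Phi_*(z_f)=(sf)(s1)\pm[1]\otimes s\psi(y_f)$. In the bar (Serre) filtration of $\Tot B(P[m-1])$, the proof of Lemma~\ref{lem:target-homology} shows that $H_{2m}$ is all of $E^\infty_{0,2m}=A^2_m(P)_\Gamma$. The isomorphism \eqref{eq:target-homology} is the edge map, which reads off the bar-degree-zero component modulo coinvariants and CE boundaries. The bar-degree-one term therefore does not affect the class, and $\Phi_*[z_f]$ corresponds to $[f\star1]=\theta_m(f)$, using the normalization $(sf)(sg)\leftrightarrow f\star g$ fixed before that lemma.

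I expect the main obstacle to be the sign and parity bookkeeping in Steps 3 and 4. The CE symmetry sign $(-1)^{(|x|+1)(|y|+1)}$, the bracket sign $(-1)^{|x|}$, and the $(-1)^p$ in $\partial_{\mathrm{tot}}$ must combine so that the surviving parity matches \eqref{eq:parity}, rather than its complement. Alongside this, I must check that all series converge in the filtration of $M$ and that $\psi$ commutes with these infinite sums, which holds because $\psi$ is filtered and hence continuous.
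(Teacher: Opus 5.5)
Your construction is the paper's. Your $x_f$, $u_f$ and $y_f=\beta(D)u_f$ are the paper's $r_f$, $u_f$ and $v_f$, and with the sign $(-1)^m$ your $z_f$ is exactly the cycle $Q_f$ of \eqref{eq:lifted-cycle}.

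What needs fixing is the parity bookkeeping in Step~3, where two false statements cancel each other.
\begin{itemize}
\item Since $|D^jb|=m-1$, graded antisymmetry gives $[b,D^jb]=(-1)^m[D^jb,b]$. So the telescoped sum is $\bigl(1+(-1)^{j+m+1}\bigr)[D^jb,b]$, not $\bigl(1+(-1)^{j+m}\bigr)[D^jb,b]$. The cancellation therefore works for $j\equiv m+1\pmod 2$; for instance, with $m=2$ and $j=1$ one has $D\bigl(\tfrac12[b,b]\bigr)=[Db,b]$.
\item Correspondingly, $U_m$ is cut out by $f(-t)=(-1)^{m+1}f(t)$, so its exponents satisfy $j\equiv m+1$, not $j\equiv m$.
\end{itemize}
With both corrected, the conclusion $[x_f,b]=Du_f$ for $f\in U_m$ holds. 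As written, however, each sentence is false, and fixing only one of them would make the argument appear to fail on all of $U_m$.

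In Step~5 you take a slightly different route. Projecting to bar degree zero modulo $\operatorname{im}(T_q-1)$ is a chain map $\Tot B_{*,*}(P[m-1])\to\CE(P[m-1])_\Gamma$. In the target the CE differential vanishes, so this map inverts \eqref{eq:target-homology} and the bar-degree-one term is irrelevant. That argument is valid. The paper instead notes that $\psi(v_f)=0$ outright: $\psi$ kills brackets, and $\psi\,\beta(D)=\beta(t)\,\psi$ by continuity. Hence $\Phi_*Q_f=(sf)(s1)$ lies in the image of the fiber inclusion, and no edge-map analysis is needed.

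Finally, to get dimension exactly $\cont$ rather than at least $\cont$, add the trivial upper bound $|A^2_m(P)_\Gamma|\le\cont$.
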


\begin{proof}
Put $b_j=D^jb$. These are cycles and
\begin{equation}\label{eq:bracket-sign}
 [b_i,b_j]=(-1)^m[b_j,b_i].
\end{equation}
For $j\geq1$ with $j\equiv m+1\pmod2$, set
\begin{equation}\label{eq:telescope}
 u_j=\frac12\sum_{i=0}^{j-1}(-1)^i[b_{j-1-i},b_i].
\end{equation}
Since $D$ is a degree-zero derivation, the intermediate terms cancel:
\[
 2Du_j=[b_j,b]+(-1)^{j-1}[b,b_j]=2[b_j,b].
\]
For $f(t)=\sum_j\lambda_jt^j\in U_m$, define
\begin{equation}\label{eq:series-lifts}
 r_f=\sum_j\lambda_jb_j,\qquad
 u_f=\sum_j\lambda_ju_j,\qquad
 v_f=B(D)u_f,
 \qquad B(z)=\frac{z}{e^z-1}\in\Q[[z]].
\end{equation}
These expressions converge: $b_j,u_j\in F^{j+1}M$, and $D$ raises
filtration. They are cycles for the internal differential, and
\begin{equation}\label{eq:difference}
 Du_f=[r_f,b],\qquad (e^D-1)v_f=[r_f,b].
\end{equation}
The second equality is the formal identity $(e^z-1)B(z)=z$ applied
to $D$; it requires no inverse of $D$.

In the total complex \eqref{eq:total}, consider
\begin{equation}\label{eq:lifted-cycle}
 Q_f=(sr_f)(sb)+(-1)^m[1]\otimes sv_f.
\end{equation}
Its terms have bidegrees $(0,2m)$ and $(1,2m-1)$, respectively.
Equations \eqref{eq:CE} and \eqref{eq:difference} give
\[
 \partial_{\mathrm{tot}}Q_f
 =(-1)^{m-1}s[r_f,b]+(-1)^m s(e^D-1)v_f=0.
\]
Here $[1]$ is the bar simplex for $1\in(\Q,+)$, whose identity is $0$.

Under $\psi$, the elements $r_f$ and $b$ map to $f$ and $1$.
Every $u_j$ maps to zero, because $P[m-1]$ is abelian; hence
$\psi(v_f)=0$. Thus $Q_f$ maps to $(sf)(s1)$, representing
$\theta_m(f)$. The naturality in Lemma~\ref{lem:chains} proves the
claimed containment. By Lemma~\ref{lem:detector} the image has
dimension at least $\cont$, and it has dimension at most $\cont$
because $|A_m^2(P)_\Gamma|\leq\cont$.
\end{proof}

\section{The completion map when \texorpdfstring{$H_1(X)=0$}{H1(X)=0}}
\label{sec:main-proof}

It remains to connect the general lifting construction with the space
$S^1\vee X$. A minimal model makes the homology of $X$ visible in its
indecomposable generators, so a bottom-degree class supplies the cycle
required by Proposition~\ref{prop:lifting}. We must also track the
original space inside its completion: this will distinguish the newly
detected classes from those already in the image of $H_{2m}(X)$.
The wedge model of Lemma~\ref{lem:model} records both features.

\begin{proof}[Proof of Theorem~\ref{thm:main} for $m\geq2$]
Use the models $M_X=(\LL(V),d)$ and $L$ of Lemma~\ref{lem:model}.
Choose $0\neq b\in V_{m-1}$ and a homogeneous basis of $V$ containing
$b$. Since all generators have degree at least $m-1$ and the differential
is decomposable, $db=0$: a term of $db$ would have degree at least
$2m-2$, whereas $|db|=m-2$.

Let $N_X$ be the kernel of $L\to\Q a$ that kills $V$, with the
filtration induced from $L$. Then $N_X$ is positive and complete, and
\[
 L=\Q a\ltimes_DN_X,\qquad D=\ad_a,
 \qquad D(F^rN_X)\subseteq F^{r+1}N_X.
\]
Define
\begin{equation}\label{eq:quotient}
 \Phi:L\longrightarrow G_m,
 \qquad a\longmapsto a,\quad b\longmapsto1,\quad
 v\longmapsto0\quad(v\neq b\text{ in the chosen basis}).
\end{equation}
This is a filtered dgl morphism. Indeed, a bracket word of length $r$
with nonzero image, other than $a$ itself, contains one $b$ and $r-1$
copies of $a$, and maps into $t^{r-1}P$.
The map therefore extends to the completion. Every $dv$ maps to zero
because it is a sum of brackets in $V$ and $P[m-1]$ is abelian.
Its restriction $\psi:N_X\to P[m-1]$ satisfies $\psi D=t\psi$ and
$\psi(b)=1$. In fact $\Phi$ is surjective, since
$\sum_j\lambda_j\ad_a^j(b)$ maps to $\sum_j\lambda_jt^j$.

Let $h:\comp Y\to\real{G_m}$ be the realization of $\Phi$; here we are using
Lemma~\ref{lem:model}. By Proposition~\ref{prop:lifting},
\begin{equation}\label{eq:large-image}
 \dim_{\Q}\operatorname{im}H_{2m}(h)=\cont.
\end{equation}
We compare this with the image of $H_{2m}(h\eta_Y)$.
On the $X$ summand, the Lie morphism factors as
\[
 M_X\longrightarrow\Q c[m-1]
       \longrightarrow P[m-1]\longrightarrow G_m,
 \qquad b\longmapsto c\longmapsto1,
\]
where $\Q c[m-1]$ is abelian and every other basis vector of $V$ maps
to zero. By \eqref{eq:wedge-unit-square}, the restriction of
$h\eta_Y$ to $X$ therefore factors, up to homotopy, through $K(\Q,m)$.
Since $H_{2m}(Y)=H_{2m}(X)$, it follows that
\begin{equation}\label{eq:small-image}
 \operatorname{im}H_{2m}(h\eta_Y)
 \subseteq\Q[1\star1]\subseteq A_m^2(P)_\Gamma.
\end{equation}
This subspace has dimension at most one, and is zero when $m$ is odd.

The map $H_{2m}(h)$ induces a surjection
\[
 \coker H_{2m}(\eta_Y)\longrightarrow
 \frac{\operatorname{im}H_{2m}(h)}
      {\operatorname{im}H_{2m}(h\eta_Y)}.
\]
By \eqref{eq:large-image} and \eqref{eq:small-image}, the space on the
right has dimension $\cont$. This proves \eqref{eq:main}.
\end{proof}

The use of the bottom nonzero degree in this proof serves only to produce
a cycle with a nonzero indecomposable component. More generally, any
nonzero rational homology class in the image of the Hurewicz homomorphism
supplies such a cycle and gives the same obstruction in its doubled
degree.

\begin{proof}[Proof of the final assertion of Theorem~\ref{thm:main}]
Assume that $X$ is simply connected and that its rational Hurewicz map
is nonzero in degree $n\geq2$.
In a minimal Lie model $M_X=\LL(V)$, the rational Hurewicz map is
induced by projection of cycles to the indecomposables $V$
(see \cite{Quillen}). Choose a cycle $b\in(M_X)_{n-1}$ whose projection is
nonzero, and a functional $\ell:V_{n-1}\to\Q$ taking that projection
to $1$. Define the quotient \eqref{eq:quotient} using $\ell$ in degree
$n-1$ and zero in the other degrees. The lifting lemma applies to
this cycle $b$, and the restriction to $X$ still factors through
$K(\Q,n)$. The preceding cokernel argument is unchanged and
gives~\eqref{eq:spherical-main}.
\end{proof}

\section{Nonzero first homology and examples}
\label{sec:examples}

When $H_1(X)\neq0$, the wedge model has additional degree-zero
generators, so its positive-ideal description no longer applies.
In this case a nonzero first cohomology class and a loop on which it
evaluates nontrivially give a reduction to two circles. We first recall
the precise cardinal bound furnished by the construction of Ivanov
and Mikhailov.

\begin{lemma}[Two-circle cardinality bound]\label{lem:two-circles}
For $W=S^1\vee S^1$, one has
\[
 \dim_{\Q}H_2(\comp W)=\cont.
\]
\end{lemma}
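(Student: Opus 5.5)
The plan is to prove the two inequalities $\dim_{\Q}H_2(\comp W)\leq\cont$ and $\dim_{\Q}H_2(\comp W)\geq\cont$ separately. Both rest on the identification recalled in Section~\ref{sec:result}:
\[
 \comp W\simeq (K(F_2,1))^\wedge_{\Q}\simeq K(\widehat F_{2,\Q},1),
\]
where $\widehat F_{2,\Q}$ is the rational pronilpotent completion of the free group $F_2$. This reduces the statement to the group homology computation $H_2(\comp W)\cong H_2(\widehat F_{2,\Q};\Q)$.

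For the upper bound, I will bound cardinalities. Write $\widehat F_{2,\Q}=\varprojlim_r (F_2/\gamma_r F_2)\otimes\Q$, where each term is the Malcev completion of a finitely generated nilpotent group, hence countable. An inverse limit of countably many countable sets has cardinality at most $\cont$. The normalized bar complex computing $H_*(\widehat F_{2,\Q};\Q)$ has, in degree $2$, the vector space with basis $\widehat F_{2,\Q}^{\,2}$, whose dimension is at most $\cont$. Its subquotient $H_2$ therefore has dimension at most $\cont$.

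For the lower bound, I will rerun the detector argument of Ivanov and Mikhailov \cite[Proposition~2.1]{IM} and keep track of cardinals, much as in Lemma~\ref{lem:detector}. Their construction has three ingredients:
\begin{itemize}
 \item a map from $\widehat F_{2,\Q}$ to a metabelian quotient, namely $\Q\ltimes\Q[[t]]$, analogous to $G_m$ here, in which one generator acts by multiplication by $e^{qt}$;
 \item a family of $2$-cycles in $H_2(\widehat F_{2,\Q})$ indexed by power series $f$, built as infinite commutator products, namely the group analogue of $Q_f$ in \eqref{eq:lifted-cycle};
 \item a linear detector $H_2\to\F\otimes_{\K}\F$ with $\K$ a countable subfield of $\F=\Q((t))$, which sends the class attached to $f$ to an element of the form $f\otimes1\pm1\otimes\sigma(f)$.
\end{itemize}
By the argument in the proof of Lemma~\ref{lem:detector}, the kernel of $f\mapsto$ (detected class) lies in the countable field $\K$. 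The family of cycles is linear in $f$ on a subspace isomorphic to $\Q[[t]]$, so its image has dimension $\cont$ modulo a space of countable dimension, hence dimension $\cont$. This gives $\dim_{\Q}H_2(\comp W)\geq\cont$, and the two bounds together give equality.

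The main obstacle is the lower bound, specifically the bookkeeping that turns "uncountable," as stated in \cite{IM}, into "at least $\cont$." I expect that the IM classes depend linearly on $f$ only after passing to the detector, because the group-level products are not additive in $f$. I would therefore first check that the composite from the parameter space to $\F\otimes_{\K}\F$ is linear modulo $\K$-relations. Failing that, I would use a cardinality argument: the detected elements $f\otimes1\pm1\otimes\sigma(f)$ for $f$ ranging over a $\cont$-sized set $\K$-independent modulo $1$ span a space of dimension $\cont$. That spanning statement needs only the functional trick of Lemma~\ref{lem:detector} applied to finite subfamilies.
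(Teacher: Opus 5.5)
Your proposal is correct and follows essentially the paper's route: the upper bound comes from $\comp W\simeq K(\widehat F_{2,\Q},1)$, with $\widehat F_{2,\Q}$ an inverse limit of countable groups, and the lower bound imports the Ivanov--Mikhailov detector (which has countable kernel) and their lifted classes, then counts cardinals. The paper sidesteps your linearity concern in the spirit of your fallback: Ivanov--Mikhailov supply lifts only for a family of series indexed by $\{0,1\}^{\mathbb N}$, not for a linear subspace, and the paper uses that the detector has countable fibers on this family, so the lifted classes form a set of cardinality $\cont$ and hence span a space of dimension $\cont$ over the countable field $\Q$.
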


\begin{proof}
Ivanov and Mikhailov prove that this homology is uncountable
\cite[Theorem~1]{IM}. Their construction in Section~5 gives the sharper
dimension bound stated here. More explicitly, for
$q=(q_i)_{i\geq1}\in\{0,1\}^{\mathbb N}$, the construction supplies series
\[
 f_q=\sum_{i\geq3}n_i(q)x^{i-1},\qquad n_{2i+1}(q)=q_i.
\]
The displayed coefficients distinguish the binary sequences, so this
family has cardinality $\cont$. The detector $\Theta_{\Q}$ has
countable kernel by Propositions 2.1 and 3.1 of \cite{IM}, hence its
restriction to this family has countable fibers. Its image consequently
has cardinality $\cont$. Proposition 4.1 and Section 5 of \cite{IM}
give lifts of all these classes through $H_2(\comp W)$; see especially
pp.~1248--1249. Over the countable field $\Q$, the span of a set of
cardinality $\cont$ has dimension $\cont$.
For the reverse bound, $\comp W\simeq K(\widehat F_{2,\Q},1)$, where
$\widehat F_{2,\Q}$ is the rational pronilpotent completion of the
free group on two generators. It is an inverse limit of countable
groups, so its bar complex has cardinality at most $\cont$ in each
degree.
\end{proof}

\begin{proof}[Proof of Theorem~\ref{thm:main} for $m=1$]
Choose $u:X\to K(\Q,1)$ representing a nonzero element of $H^1(X;\Q)$.
There is a based loop $\gamma:S^1\to X$ such that $u\gamma$ induces a
nonzero map on $H_1$, hence a rational homology equivalence.
Put $Z=S^1\vee K(\Q,1)$ and consider
\[
 W=S^1\vee S^1
   \xrightarrow{\ i=\mathrm{id}\vee\gamma\ }Y
   \xrightarrow{\ p=\mathrm{id}\vee u\ }Z.
\]
The composite $pi$ is a rational homology equivalence. Its completion
is therefore a weak equivalence, so
$H_2(\comp p):H_2(\comp Y)\to H_2(\comp Z)$ is surjective.
On the other hand, $H_2(Z)=0$, and naturality gives
$H_2(\comp p)H_2(\eta_Y)=0$.
Consequently $\coker H_2(\eta_Y)$ surjects onto
$H_2(\comp Z)\cong H_2(\comp W)$, which has dimension $\cont$ by
Lemma~\ref{lem:two-circles}.
\end{proof}

The theorem gives a lower bound without restricting the size of the
rational homology of $X$. If that homology has at most countable total
dimension, a cardinality estimate on its complete Lie model gives a
matching upper bound.

\begin{corollary}\label{cor:exact-size}
If, in addition to the hypotheses of Theorem~\ref{thm:main},
$\widetilde H_*(X)$ has at most countable total dimension, then
\[
 \dim_{\Q}H_{2m}(\comp Y)
 =\dim_{\Q}\coker H_{2m}(\eta_Y)=\cont.
\]
\end{corollary}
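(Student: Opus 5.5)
Since Theorem~\ref{thm:main} already gives $\dim_{\Q}\coker H_{2m}(\eta_Y)\geq\cont$, and the cokernel is a quotient of $H_{2m}(\comp Y)$, it suffices to prove the upper bound $\dim_{\Q}H_{2m}(\comp Y)\leq\cont$. Over the countable field $\Q$, this follows from $|H_{2m}(\comp Y)|\leq\cont$. That in turn follows from a cardinality bound on any chain complex computing this homology. I would split into the cases $m\geq2$ and $m=1$, as the paper does.

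For $m\geq2$ we have $H_1(X)=0$. Use the model $L=(\widehat\LL(a,V),d)$ of Lemma~\ref{lem:model}, with $V_{q-1}\cong\widetilde H_q(X)$. By hypothesis $V$ has at most countable dimension. In each fixed bracket length $r$, the free Lie algebra $\LL^r(a,V)$ is then countable. Each homogeneous element of $\widehat\LL(a,V)$ is a sequence indexed by $r\in\mathbb N$ of such components, so $|L|\leq\aleph_0^{\aleph_0}=\cont$; the same bound holds for $N_X$. By Lemma~\ref{lem:chains} and Corollary~\ref{cor:wedge-universal-cover}, $H_*(\comp Y)\cong H_*(\Tot B_{*,*}(N_X))$. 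Each $B_{p,k}(N_X)$ is spanned by finite tensors of bar simplices in $\Gamma^p$ (countable) with finite CE monomials in $sN_X$. Hence each group has cardinality at most $\cont$, and so does its homology in degree $2m$.

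For $m=1$, I would avoid Lie models: $\comp Y$ is computed by the Bousfield--Kan totalization, or equivalently by a complete Lie model whose degree-zero generators come from $H_1$. The cleanest route is to model $X$ itself by a minimal complete model $\widehat\LL(V)$ with $V$ countable, which exists by \cite[Proposition 8.35]{LieModels}, and model $Y$ by $\widehat\LL(a,V)$. The same cardinality count shows that every homogeneous piece has size at most $\cont$. I would then compute the homology of $\real{\widehat\LL(a,V)}$ by a simplicial set whose $n$-simplices are $\operatorname{Hom}(\mathfrak L_n,-)$, i.e.\ finitely many elements of the model each, hence of cardinality at most $\cont$. The singular chains of this simplicial set have size at most $\cont$ in each degree, and $\real{\widehat\LL(a,V)}\simeq\comp Y$ by \eqref{eq:completion}. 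This argument in fact also covers $m\geq2$ uniformly.

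The main obstacle is making sure the cardinality of the realization is controlled: $\real M_n=\operatorname{Hom}_{\mathbf{cdgl}}(\mathfrak L_n,M)$ is determined by the images of the finitely many generators $s^{-1}C_*(\Delta^n)$. It is therefore a subset of a finite product of copies of $M$, giving $|\real M_n|\leq\cont$. Once this is checked, $|C_{2m}(\real L;\Q)|\leq\cont$ gives the upper bound, and combining it with \eqref{eq:main} yields the stated equalities.
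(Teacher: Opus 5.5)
Your proposal is correct and, in its uniform version (your last two paragraphs), it is the paper's proof: a minimal complete Lie model of $Y$ with countably many generators has cardinality at most $\cont$ in each degree. Each simplex of its realization is determined by the images of the finitely many generators of $\mathfrak L_n$, so the simplicial chains and hence $H_{2m}(\comp Y)$ have cardinality at most $\cont$, and Theorem~\ref{thm:main} supplies the lower bound; the split into $m\geq2$ and $m=1$, and the count through $\Tot B_{*,*}(N_X)$, are valid but unnecessary.
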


\begin{proof}
A minimal complete Lie model of $Y$ has countably many generators.
For each fixed bracket length, the corresponding vector space is
countable as a set, so the completion has cardinality at most $\cont$. A simplex in its realization is a
filtered morphism from a free complete Lie algebra on finitely many
generators and is determined by their images. There are therefore at
most $\cont$ simplices in each degree, giving the upper bound on
homology. Theorem~\ref{thm:main} supplies the lower bound.
\end{proof}

\begin{example}[Spheres]
For every $r\geq1$, the space $S^1\vee S^r$ is $\Q$-bad and
\[
 \dim_{\Q}H_{2r}\bigl(\comp{(S^1\vee S^r)}\bigr)=\cont.
\]
For $r=2$, the first telescoping element is $u_1=\tfrac12[b,b]$;
for $r=3$, it is $u_2=[Db,b]$. These are the two parity cases of
\eqref{eq:telescope}.
\end{example}

\begin{example}[Eilenberg--Mac Lane spaces]
Let $A$ be an abelian group with $A\otimes\Q\neq0$, and let $r\geq1$.
The first nonzero rational homology of $K(A,r)$ occurs in degree $r$.
Theorem~\ref{thm:main} shows that $S^1\vee K(A,r)$ is $\Q$-bad and
that its completion map has a cokernel of dimension at least $\cont$
on $H_{2r}$, regardless of the cardinality of $A$.
If $A\otimes\Q$ has at most countable dimension, the rational homology
of $K(A,r)$, the free graded-commutative algebra on $A\otimes\Q$ in
degree $r$, has at most countable total dimension. In this case
Corollary~\ref{cor:exact-size} gives equality with $\cont$.
\end{example}

\section{Replacing the circle by a space with nonzero first homology}
\label{sec:generalisation}

The passage from $S^1\vee X$ to $A\vee X$ uses only naturality and the
fact that rational completion carries rational homology equivalences
to weak equivalences \cite[Chapter~I, \S5]{BK}; see also
\cite[Introduction]{Completion}. We keep track of the cokernel itself,
so that the argument applies even when the homology of $A$ or $X$ has
arbitrary dimension.

For a connected based space $T$ and $j\geq1$, write
\begin{equation}\label{eq:completion-cokernel}
 \mathcal D_j(T)=\coker\bigl(H_j(T)
             \xrightarrow{\,H_j(\eta_T)\,}H_j(\comp T)\bigr).
\end{equation}
For a based map $f:T\to T'$, naturality gives a commutative square
\[
 \begin{tikzcd}[column sep=large]
 H_j(T) \arrow[r,"H_j(f)"] \arrow[d,"H_j(\eta_T)"'] &
 H_j(T') \arrow[d,"H_j(\eta_{T'})"] \\
 H_j(\comp T) \arrow[r,"H_j(\comp f)"'] & H_j(\comp{T'}).
 \end{tikzcd}
\]
It therefore induces a linear map
$\mathcal D_j(f):\mathcal D_j(T)\to\mathcal D_j(T')$, functorially in
$f$. If $f$ is a rational homology equivalence, both horizontal maps
are isomorphisms, and hence $\mathcal D_j(f)$ is an isomorphism.

\begin{proposition}[Transfer from a circle summand]
\label{prop:circle-transfer}
Let $A$ be a connected based CW complex with $H_1(A)\neq0$.
There is a based loop $\gamma:S^1\to A$ such that, for every connected
based CW complex $X$ and every $j\geq1$, the map
\[
 \gamma\vee\mathrm{id}_X:S^1\vee X\longrightarrow A\vee X
\]
induces a split injection
\begin{equation}\label{eq:split-cokernel}
 \mathcal D_j(S^1\vee X)\lhook\joinrel\longrightarrow
 \mathcal D_j(A\vee X).
\end{equation}
\end{proposition}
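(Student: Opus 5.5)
We construct a retraction up to rational homology equivalence, mirroring the argument for Theorem~\ref{thm:main} in the case $m=1$. Since $H_1(A)\neq0$, we may choose $u:A\to K(\Q,1)$ representing a nonzero class of $H^1(A;\Q)=\operatorname{Hom}(H_1(A;\Z),\Q)$. We then pick a based loop $\gamma:S^1\to A$ on which $u$ evaluates nontrivially; this is possible because $H_1(A;\Z)$ is the abelianization of $\pi_1(A)$. The composite $u\gamma:S^1\to K(\Q,1)$ then induces a nonzero map $\Q\to\Q$ on $H_1$, and both spaces have vanishing rational homology above degree one, so $u\gamma$ is a rational homology equivalence. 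The loop $\gamma$ depends only on $A$, as the statement requires.

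For arbitrary $X$, consider the composite
\[
 S^1\vee X\xrightarrow{\ \gamma\vee\mathrm{id}_X\ }A\vee X
 \xrightarrow{\ u\vee\mathrm{id}_X\ }K(\Q,1)\vee X .
\]
The composite is $(u\gamma)\vee\mathrm{id}_X$. Reduced rational homology of a wedge of connected based CW complexes is the direct sum of the reduced homologies of the summands, naturally in wedge maps. Hence $(u\gamma)\vee\mathrm{id}_X$ is a rational homology equivalence, and by the remark following \eqref{eq:completion-cokernel}, $\mathcal D_j((u\gamma)\vee\mathrm{id}_X)$ is an isomorphism for every $j\geq1$.

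Functoriality of $\mathcal D_j$ then gives
\[
 \mathcal D_j(u\vee\mathrm{id}_X)\circ\mathcal D_j(\gamma\vee\mathrm{id}_X)
 =\mathcal D_j((u\gamma)\vee\mathrm{id}_X),
\]
which is an isomorphism. It follows that $\mathcal D_j(\gamma\vee\mathrm{id}_X)$ is injective, and that
\[
 \mathcal D_j((u\gamma)\vee\mathrm{id}_X)^{-1}\circ\mathcal D_j(u\vee\mathrm{id}_X)
\]
is a left inverse, so the injection \eqref{eq:split-cokernel} is split.

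The only point that needs care is the functoriality of $\mathcal D_j$ together with the invariance of $\mathcal D_j$ under rational homology equivalences. This relies on the fact that completion carries rational homology equivalences to weak equivalences \cite[Chapter~I, \S5]{BK}, and on naturality of $\eta$. Both were recorded just before the statement, so no step here is a genuine obstacle; the content is the choice of $\gamma$ independent of $X$. One should also note that $K(\Q,1)$ need not be a finite complex. Nothing in the argument requires finiteness, because $\mathcal D_j$ is defined for arbitrary connected based spaces.
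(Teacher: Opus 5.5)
Your proposal is correct and follows the paper's proof essentially step for step: the same choice of $u$ and $\gamma$, the same factorization through $K(\Q,1)\vee X$, and the same retraction $\mathcal D_j\bigl((u\gamma)\vee\mathrm{id}_X\bigr)^{-1}\mathcal D_j(u\vee\mathrm{id}_X)$. The only point the paper spells out that you assert without justification is why $K(\Q,1)$ has no rational homology above degree one; the paper obtains this by writing $(\Q,+)$ as a filtered union of infinite cyclic subgroups.
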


\begin{proof}
Choose a nonzero linear functional $\lambda:H_1(A)\to\Q$.
The homomorphism
\[
 \pi_1(A)\longrightarrow H_1(A;\Z)
       \longrightarrow H_1(A)\xrightarrow{\lambda}\Q
\]
is nonzero: the image of integral first homology spans $H_1(A)$ over
$\Q$. Choose a based loop $\gamma$ on which it takes a nonzero value,
and rescale $\lambda$ so that this value is $1$.
This homomorphism is represented by a based map $u:A\to K(\Q,1)$.
Equivalently, $u$ represents the cohomology class corresponding to
$\lambda$ under the universal coefficient isomorphism
\[
 H^1(A;\Q)\cong\operatorname{Hom}_{\Q}(H_1(A),\Q).
\]
Thus $u\gamma:S^1\to K(\Q,1)$ induces an isomorphism on $H_1$.
The rational homology of $K(\Q,1)$ is $\Q$ in degrees zero and one
and zero in higher degrees. Indeed, the group $(\Q,+)$ is the filtered
union of its infinite cyclic subgroups, and group homology commutes
with filtered colimits. It follows that $u\gamma$ is a rational
homology equivalence.

Put $W=S^1\vee X$, $Z=A\vee X$, and $V=K(\Q,1)\vee X$, and consider
\begin{equation}\label{eq:circle-transfer-maps}
 W\xrightarrow{\ i=\gamma\vee\mathrm{id}_X\ }Z
  \xrightarrow{\ p=u\vee\mathrm{id}_X\ }V.
\end{equation}
The composite $h=pi=(u\gamma)\vee\mathrm{id}_X$ is a rational
homology equivalence, by the direct-sum decomposition of reduced
homology of a wedge. Consequently $\mathcal D_j(h)$ is an
isomorphism. Functoriality gives
\[
 \mathcal D_j(p)\mathcal D_j(i)=\mathcal D_j(h).
\]
The map
\begin{equation}\label{eq:cokernel-retraction}
 r_j=\mathcal D_j(h)^{-1}\mathcal D_j(p):
 \mathcal D_j(Z)\longrightarrow\mathcal D_j(W)
\end{equation}
satisfies $r_j\mathcal D_j(i)=\mathrm{id}$. This proves the split
injection~\eqref{eq:split-cokernel}.
\end{proof}

\begin{proof}[Proof of Corollary~\ref{cor:generalisation}]
Apply Proposition~\ref{prop:circle-transfer} with $j=2m$.
Theorem~\ref{thm:main} gives
\[
 \dim_{\Q}\mathcal D_{2m}(S^1\vee X)\geq\cont.
\]
Since this vector space is a direct summand of
$\mathcal D_{2m}(A\vee X)$, inequality~\eqref{eq:generalised-main}
follows. The completion has homology dimension at least that of this
cokernel, and the nonzero cokernel implies $\Q$-badness.
If the rational Hurewicz map of $X$ is nonzero in degree $n\geq2$,
use the same split injection with $j=2n$ and the final assertion of
Theorem~\ref{thm:main}. This gives~\eqref{eq:generalised-spherical}.
\end{proof}

\begin{corollary}[Goodness criterion for a general summand]
\label{cor:generalised-criterion}
Let $A$ and $X$ be connected based CW complexes with $H_1(A)\neq0$.
Then $A\vee X$ is $\Q$-good if and only if $A$ is $\Q$-good and
$\widetilde H_*(X)=0$.
\end{corollary}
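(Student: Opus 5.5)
The two directions are handled separately: necessity uses Corollary~\ref{cor:generalisation} and the splitting argument of Proposition~\ref{prop:circle-transfer}, and sufficiency uses a rational homology equivalence. For necessity, suppose $Z=A\vee X$ is $\Q$-good. If $\widetilde H_*(X)\neq0$, Corollary~\ref{cor:generalisation} gives $\dim_{\Q}\mathcal D_{2m}(Z)\geq\cont$, so $H_{2m}(\eta_Z)$ is not surjective. This contradicts goodness, hence $\widetilde H_*(X)=0$.

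It remains to show that $A$ is good. The plan is to display $\eta_A$ as a retract of $\eta_Z$ in the arrow category. The inclusion $j:A\to A\vee X$ and the collapse $c:A\vee X\to A$ satisfy $cj=\mathrm{id}_A$. By naturality of $\eta$, applying $H_*$ to the naturality squares for $j$ and $c$ exhibits $H_*(\eta_A)$ as a retract of $H_*(\eta_Z)$. A retract of an isomorphism is an isomorphism, so $A$ is good. Alternatively one can work with the cokernels $\mathcal D_j$ together with kernels, but the retract formulation handles injectivity and surjectivity simultaneously.

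For sufficiency, assume $A$ is good and $\widetilde H_*(X)=0$. The collapse $c:A\vee X\to A$ induces an isomorphism on reduced rational homology, by the wedge decomposition $\widetilde H_*(A\vee X)=\widetilde H_*(A)\oplus\widetilde H_*(X)$. Therefore $\comp c$ is a weak equivalence, by the Bousfield--Kan property cited at the start of Section~\ref{sec:generalisation}. In the naturality square for $c$, three maps are then isomorphisms on $H_*$: $H_*(c)$, $H_*(\comp c)$, and $H_*(\eta_A)$. Hence $H_*(\eta_{A\vee X})$ is an isomorphism, so $A\vee X$ is good. This is exactly the argument of Corollary~\ref{cor:criterion} with $S^1$ replaced by $A$.

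No step is a serious obstacle. The only point needing care is the connected basepoint hypotheses in the retract argument, namely that $j$ and $c$ are based maps between connected CW complexes. Goodness is defined via reduced homology, so degree zero causes no issue.
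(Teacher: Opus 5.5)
Your proof is correct. The step forcing $\widetilde H_*(X)=0$ via Corollary~\ref{cor:generalisation} and your sufficiency argument are the same as the paper's. The difference lies in how you show that $A$ is $\Q$-good.

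The paper, having established $\widetilde H_*(X)=0$, reuses the naturality square for the collapse $c$. In that square $H_*(c)$ and $H_*(\comp c)$ are isomorphisms, so $H_*(\eta_{A\vee X})$ is an isomorphism if and only if $H_*(\eta_A)$ is. One square therefore settles both directions at once.

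Your retract argument via $cj=\mathrm{id}_A$ uses functoriality of completion ($\comp c\circ\comp j=\mathrm{id}$) and needs no hypothesis on $X$. It shows that any retract of a $\Q$-good space is $\Q$-good, so $A$ is $\Q$-good whenever $A\vee X$ is, for arbitrary $X$. This is slightly more general. In the present corollary it is redundant, though: once $\widetilde H_*(X)=0$ is known, the square you already use for sufficiency gives this implication as well.
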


\begin{proof}
If $A\vee X$ is $\Q$-good, Corollary~\ref{cor:generalisation} forces
$\widetilde H_*(X)=0$. Whenever this vanishing holds, the collapse
$c:A\vee X\to A$ is a rational homology equivalence. Naturality gives
\[
 H_*(\comp c)H_*(\eta_{A\vee X})=H_*(\eta_A)H_*(c),
\]
where $H_*(c)$ and $H_*(\comp c)$ are isomorphisms. Thus
$H_*(\eta_{A\vee X})$ is an isomorphism if and only if
$H_*(\eta_A)$ is, proving both directions.
\end{proof}

\appendix

\section{Auxiliary comparison results}
\label{app:comparison-results}

\begin{proof}[Proof of Lemma~\ref{lem:mc-comparison}]
For \(r\geq2\), put \(M_r=M/F^rM\). Since
\[
 [F^pM,F^qM]\subseteq F^{p+q}M,
\]
each \(M_r\) is nilpotent.

On \(M_r\), the induced filtration and the lower central filtration give canonically isomorphic realizations and Maurer--Cartan simplicial sets, naturally with respect to the quotient maps. The standard nilpotent comparison
provides natural simplicial maps
\[
 \iota_r:\real{M_r}\longrightarrow\MC_\bullet(M_r),
 \qquad
 \rho_r:\MC_\bullet(M_r)\longrightarrow\real{M_r},
\]
with
\[
 \rho_r\iota_r=\operatorname{id}_{\real{M_r}},
\]
and both maps are weak equivalences. Moreover, there is a natural
simplicial identification
\[
 \real{M_r}\cong\gamma_\bullet(M_r)
\]
with the gauge-fixed nerve. These assertions follow from the
natural comparison construction in
\cite[Theorems~0.1--0.2 and Section~3]{InfinityQuillen};
see also \cite[Chapter~11]{LieModels}.

The quotient maps \(M_{r+1}\twoheadrightarrow M_r\) are surjective
morphisms of nilpotent dgls. Hence the transition maps
\[
 \MC_\bullet(M_{r+1})\longrightarrow\MC_\bullet(M_r)
\]
are Kan fibrations by \cite[Proposition~4.7]{Getzler}.
Likewise, \cite[Theorem~5.10]{Getzler} implies that
\[
 \gamma_\bullet(M_{r+1})\longrightarrow\gamma_\bullet(M_r)
\]
are Kan fibrations. Under the natural identification above, these
are the transition maps of the realization tower. Both towers
therefore consist of Kan complexes with Kan fibration transition
maps.

Naturality makes the maps \(\iota_r\) and \(\rho_r\) compatible
with the transition maps. Completeness and the definition of the
completed tensor product give natural identifications
\[
 \real M\cong\varprojlim_r\real{M_r},
 \qquad
 \MC_\bullet(M)\cong\varprojlim_r\MC_\bullet(M_r).
\]
Define
\[
 \iota_M=\varprojlim_r\iota_r,
 \qquad
 \rho_M=\varprojlim_r\rho_r.
\]
The ordinary inverse limit of a tower of Kan complexes with Kan
fibration transition maps computes its homotopy inverse limit.
Consequently, the levelwise weak equivalences \(\iota_r\) induce
a weak equivalence \(\iota_M\).

Passing to inverse limits in
\(\rho_r\iota_r=\operatorname{id}_{\real{M_r}}\) yields
\[
 \rho_M\iota_M=\operatorname{id}_{\real M}.
\]
Thus \(\rho_M\) is also a weak equivalence by the
two-out-of-three property. Finally, naturality at each quotient
passes to the inverse limit, proving the asserted naturality
and compatibility with automorphisms.
\end{proof}

\begin{lemma}[Exponentiating a filtration-raising derivation]
\label{lem:exponential-action}
Let \(M\) be a complete filtered dgl and let \(D\) be a degree-zero
chain derivation satisfying
\[
 D(F^rM)\subseteq F^{r+1}M
 \qquad (r\geq1).
\]
Then, for every \(q\in\Q\), the exponential
\[
 T_q=e^{qD}
\]
defines a filtration-preserving dgl automorphism of \(M\). These
automorphisms satisfy
\[
 T_qT_r=T_{q+r},
 \qquad
 T_q^{-1}=T_{-q},
\]
and hence define an action of \(\Gamma=(\Q,+)\) on \(M\).

Moreover, if \(\gamma\) denotes the lower central filtration of \(M\),
then
\[
 T_q(\gamma^rM)=\gamma^rM
 \qquad (r\geq1).
\]
Thus \(T_q\) also defines a filtered automorphism of \((M,\gamma)\).
In particular, the identity morphism
\[
 \iota:(M,\gamma)\longrightarrow(M,F)
\]
is \(\Gamma\)-equivariant.
\end{lemma}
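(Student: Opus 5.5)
The plan is to work directly with the exponential series, writing $T_q=e^{qD}=\sum_{k\geq0}q^kD^k/k!$. The first step is convergence and filtration preservation. Iterating the hypothesis gives $D^k(F^rM)\subseteq F^{r+k}M$. Hence, for $x\in F^rM$, the partial sums of $T_qx$ stabilize modulo each $F^sM$: only the terms with $k<s-r$ survive there. The completeness isomorphism $M\cong\varprojlim_s M/F^sM$ then defines $T_qx$ as a well-defined element of $F^rM$. The map $T_q$ is linear and of degree zero. It commutes with $d$ because each $D^k$ does and $d$ preserves the filtration, so $d$ is continuous and passes through the limit.

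The second step is the Lie identities. Leibniz for the degree-zero derivation $D$ gives
\[
 D^k[x,y]=\sum_{i+j=k}\binom{k}{i}[D^ix,D^jy].
\]
Summing with weights $q^k/k!$ yields $T_q[x,y]=[T_qx,T_qy]$. The double series converges because $[F^pM,F^qM]\subseteq F^{p+q}M$, so one may work modulo $F^sM$, where all sums are finite. In the same way, the binomial theorem for the commuting operators $qD$ and $rD$ gives $T_qT_r=T_{q+r}$, and $T_0=\mathrm{id}$. Therefore $T_q$ is invertible with inverse $T_{-q}$, and $T_{-q}$ also preserves the filtration. The upshot is a filtration-preserving dgl automorphism, and $q\mapsto T_q$ is a homomorphism from $\Gamma$.

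The third step is the lower central filtration, which I expect to be the only point needing care. The subtlety is whether $\gamma^rM$ means the algebraic span of $r$-fold brackets, or its closure. An arbitrary derivation preserves the algebraic lower central series, since $D[x,y]=[Dx,y]+[x,Dy]$; by induction on $r$ this gives $D(\gamma^rM)\subseteq\gamma^rM$. The difficulty is that $T_q$ is an infinite series, and $\gamma^rM$ need not be closed. I would handle this as follows.
\begin{itemize}
\item If $\gamma^r$ is taken closed in the $F$-topology, then $T_q$ is continuous and maps the span of $r$-fold brackets into $\gamma^rM$. Indeed, $T_q[x_1,[\dots,x_r]]=[T_qx_1,[\dots,T_qx_r]]$ by the second step, and continuity extends this to the closure.
\item If $\gamma^r$ is the algebraic span, the same bracket identity shows directly that $T_q$ maps each $r$-fold bracket to an $r$-fold bracket of elements of $M$. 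Linearity then gives $T_q(\gamma^rM)\subseteq\gamma^rM$ with no limit argument at all.
\end{itemize}
Applying the inclusion to $T_{-q}$ gives the reverse inclusion, hence $T_q(\gamma^rM)=\gamma^rM$.

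The final step is equivariance. The identity map $\iota\colon(M,\gamma)\to(M,F)$ commutes with $T_q$ tautologically, since it is the same underlying map on the same set. It is therefore $\Gamma$-equivariant once $T_q$ is known to be an automorphism of both filtered objects.
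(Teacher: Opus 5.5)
Your proposal is correct and follows essentially the same route as the paper. The paper also truncates to the nilpotent quotients $M/F^NM$, where $e^{qD_N}$ is a finite sum, checks the Leibniz, binomial and exponential identities there, and passes to the inverse limit; this is your ``work modulo $F^sM$'' argument. It treats $\gamma$ with the same observation that a Lie automorphism preserves the algebraic lower central series $\gamma^{r+1}M=[M,\gamma^rM]$, so your additional discussion of closures is correct but not needed.
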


\begin{proof}
For every \(N\geq2\), the derivation \(D\) induces a degree-zero chain
derivation
\[
 D_N:M/F^NM\longrightarrow M/F^NM.
\]
Indeed, \(D(F^NM)\subseteq F^{N+1}M\subseteq F^NM\). Iterating the
filtration-raising condition gives
\[
 D^j(F^rM)\subseteq F^{r+j}M.
\]
In particular,
\[
 D^{N-1}(M)\subseteq F^NM,
\]
so \(D_N^{N-1}=0\).

Consequently, for \(q\in\Q\), the exponential
\[
 T_{q,N}=e^{qD_N}
 =\sum_{j=0}^{N-1}\frac{q^jD_N^j}{j!}
\]
is a finite sum. The derivation identity implies, by induction on
\(n\), that
\[
 D_N^n[x,y]
 =
 \sum_{i=0}^n
 \binom{n}{i}
 [D_N^ix,D_N^{n-i}y].
\]
It follows that
\begin{align*}
 e^{qD_N}[x,y]
 &=
 \sum_{n\geq0}\frac{q^n}{n!}D_N^n[x,y] \\
 &=
 \sum_{i,j\geq0}
 \frac{q^{i+j}}{i!\,j!}
 [D_N^ix,D_N^jy] \\
 &=
 [e^{qD_N}x,e^{qD_N}y].
\end{align*}
All sums here are finite because \(D_N\) is nilpotent. Since \(D_N\)
commutes with the differential, \(T_{q,N}\) is a dgl morphism.

The usual exponential identities for commuting nilpotent operators give
\[
 T_{q,N}T_{r,N}=T_{q+r,N},
 \qquad
 T_{q,N}T_{-q,N}=\operatorname{id}.
\]
Thus \(T_{q,N}\) is a dgl automorphism with inverse \(T_{-q,N}\).

The automorphisms \(T_{q,N}\) are compatible with the quotient maps
\[
 M/F^{N+1}M\longrightarrow M/F^NM.
\]
Using completeness,
\[
 M\cong\varprojlim_N M/F^NM,
\]
we therefore obtain a dgl automorphism
\[
 T_q=\varprojlim_NT_{q,N}
     =e^{qD}:M\longrightarrow M.
\]
The quotientwise identities pass to the inverse limit and give
\[
 T_qT_r=T_{q+r},
 \qquad
 T_q^{-1}=T_{-q}.
\]
Moreover, if \(x\in F^rM\), then \(D^jx\in F^{r+j}M\subseteq F^rM\);
hence \(T_q(F^rM)=F^rM\). Thus \(T_q\) is filtration-preserving for
the specified filtration \(F\).

Finally, every Lie algebra automorphism preserves the lower central
series. Indeed, writing
\[
 \gamma^1M=M,
 \qquad
 \gamma^{r+1}M=[M,\gamma^rM],
\]
an induction gives
\[
 \begin{aligned}
 T_q(\gamma^{r+1}M)
 &=
 T_q([M,\gamma^rM])  \\
 &=
 [T_q(M),T_q(\gamma^rM)] \\
 &=
 [M,\gamma^rM]
 =\gamma^{r+1}M.
 \end{aligned}
\]
Therefore \(T_q\) is also a filtered automorphism of \((M,\gamma)\).
Since \(\iota\) is the identity on the underlying dgl, one has
\[
 \iota T_q=T_q\iota,
\]
so \(\iota:(M,\gamma)\to(M,F)\) is \(\Gamma\)-equivariant.
\end{proof}

\begin{lemma}[Equivariant bar comparison]
\label{lem:equivariant-bar-comparison}
Let \(\Gamma\) be a discrete group.

\begin{enumerate}
\item
If \(f:A_*\to A'_*\) is a \(\Gamma\)-equivariant
quasi-isomorphism of nonnegative chain complexes of \(\Q\)-vector
spaces, then
\[
 \Tot C_*(\Gamma;f):
 \Tot C_*(\Gamma;A_*)
 \longrightarrow
 \Tot C_*(\Gamma;A'_* )
\]
is a quasi-isomorphism.

\item
For every simplicial \(\Gamma\)-set \(X\), there is a natural
quasi-isomorphism
\[
 C_*(E\Gamma\times_\Gamma X;\Q)
 \xrightarrow{\simeq}
 \Tot C_*\bigl(\Gamma;C_*(X;\Q)\bigr).
\]
Under this comparison, the topological fiber inclusion
\[
 i_X:X\longrightarrow E\Gamma\times_\Gamma X
\]
corresponds to the inclusion of the coefficient complex in bar degree
zero,
\[
 j_X:C_*(X;\Q)=C_0\bigl(\Gamma;C_*(X;\Q)\bigr)
 \longrightarrow
 \Tot C_*\bigl(\Gamma;C_*(X;\Q)\bigr).
\]
\end{enumerate}
Both assertions are natural in equivariant maps.
\end{lemma}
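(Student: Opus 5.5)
For part (1), I would filter both total complexes by bar degree. Since \(\Gamma\) acts on \(A_*\) and \(A'_*\), the normalized bar complex in bar degree \(p\) is \(C_p(\Gamma;A_*)=\Q[\bar\Gamma^{p}]\otimes A_*\), where \(\bar\Gamma=\Gamma\setminus\{0\}\), with the coefficient differential acting on the second factor. All complexes are nonnegatively graded in both directions, so the filtration by bar degree \(\leq p\) is exhaustive and bounded below in each total degree. The \(E^1\) page is then \(\Q[\bar\Gamma^p]\otimes H_*(A)\to\Q[\bar\Gamma^p]\otimes H_*(A')\). Tensoring over \(\Q\) with the free module \(\Q[\bar\Gamma^p]\) is exact, so this map is an isomorphism. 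The comparison theorem for first-quadrant spectral sequences then gives a quasi-isomorphism. Alternatively, I would note that \(\Tot C_*(\Gamma;-)\) preserves short exact sequences and apply it to the mapping cone of \(f\): the cone is acyclic, and its total bar complex is acyclic by the same filtration argument. Naturality in equivariant maps is evident from the formulas.

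For part (2), I would treat \(E\Gamma\times_\Gamma X\) as the diagonal of a bisimplicial set. Its \((p,n)\) simplices are \(\Gamma^p\times X_n\), with horizontal faces given by the inhomogeneous bar operators, the zeroth face twisted by the action as in the proof of Lemma~\ref{lem:split}, and vertical faces those of \(X\). Applying \(\Q[-]\) levelwise gives a bisimplicial vector space. The Eilenberg--Zilber--Cartier theorem supplies a natural quasi-isomorphism (Alexander--Whitney) from the chains on the diagonal to the total complex of the associated double complex. Normalizing in both directions, the double complex in bar degree \(p\) is exactly \(\Q[\bar\Gamma^p]\otimes C_*(X;\Q)\). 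Its horizontal differential is the bar differential with coefficients in the \(\Gamma\)-module \(C_*(X;\Q)\), and its vertical differential is the chain differential of \(X\), taken with the sign convention \((-1)^p\) of \eqref{eq:total}. So the double complex is precisely \(C_*(\Gamma;C_*(X;\Q))\).

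Normalization in each direction is a quasi-isomorphism by the Dold--Kan normalization theorem. Composing the diagonal comparison with normalization gives the required map. It is natural because every step is functorial in equivariant maps of simplicial \(\Gamma\)-sets.

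The fiber statement comes from restricting to bar degree \(p=0\). The fiber inclusion \(i_X\) sends \(x\in X_n\) to the simplex \((x;0,\ldots,0)\), the totally degenerate element in the bar direction. On \(\Q[\Gamma^0\times X_n]\) the Alexander--Whitney map reduces to the identity into the \((0,n)\) slot, so \(i_X\) corresponds to \(j_X\).

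The main obstacle is bookkeeping. One must check that the twisted zeroth face produces the term \(\partial_{\mathrm{bar}}([q]\otimes w)=(T_q-1)w\) with the correct sign and the correct direction of the action, rather than \(T_{-q}\). One must also check that the sign \((-1)^p\) matches the Eilenberg--Zilber convention. I would fix these by computing the two faces of a \((1,n)\)-simplex explicitly and, if necessary, composing with the automorphism \(q\mapsto -q\). That automorphism does not affect the quasi-isomorphism.
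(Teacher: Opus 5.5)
Your proposal is correct and follows the paper's proof essentially step for step. For (1) it uses the bar-degree filtration with $E^1_{p,q}\cong C_p(\Gamma;H_q(A))$ and the comparison theorem. For (2) it uses the bisimplicial Borel construction $\Gamma^p\times X_q$ and the normalized Eilenberg--Zilber comparison from its diagonal. It also uses the observation that the fiber simplices $(x;e,\ldots,e)$ are horizontally degenerate in every positive bar degree, so only the bar-degree-zero component survives.

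Your sign worry is harmless. With the twist in $d_0$ as in Lemma~\ref{lem:split}, $d_0-d_1$ on a $(1,n)$-simplex already gives $(T_q-1)w$, so no $q\mapsto-q$ correction is needed. Such a correction would in any case only be available for abelian $\Gamma$.
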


\begin{proof}
For fixed bar degree \(p\), the normalized group-bar complex is a
direct sum of copies of its coefficient complex:
\[
 C_p(\Gamma;A_*)
 \cong
 \bigoplus_{(g_1,\ldots,g_p)\in(\Gamma\setminus\{e\})^p}A_*.
\]
Consequently, if \(f:A_*\to A'_*\) is a quasi-isomorphism, then
\(C_p(\Gamma;f)\) is a quasi-isomorphism for every \(p\). Filtering the
total complex by bar degree gives a first-quadrant spectral sequence
with
\[
 E^1_{p,q}(A)
 =
 H_q\bigl(C_p(\Gamma;A_*)\bigr)
 \cong
 C_p\bigl(\Gamma;H_q(A)\bigr).
\]
The map induced by \(f\) is an isomorphism on the \(E^1\)-page, so the
spectral-sequence comparison theorem proves the first assertion.

For the second assertion, consider the bisimplicial Borel construction
\[
 \mathcal B_{p,q}=\Gamma^p\times X_q,
\]
with the usual inhomogeneous bar operators in the first direction and
the simplicial operators of \(X\) in the second. Its diagonal is
\[
 \operatorname{diag}\mathcal B
 \cong E\Gamma\times_\Gamma X.
\]
The normalized Eilenberg--Zilber comparison for bisimplicial
\(\Q\)-vector spaces gives the natural quasi-isomorphism
\[
 C_*(E\Gamma\times_\Gamma X;\Q)
 \xrightarrow{\simeq}
 \Tot C_*\bigl(\Gamma;C_*(X;\Q)\bigr).
\]

In the inhomogeneous model, the fiber inclusion is given in degree
\(n\) by
\[
 x\longmapsto(x;e,\ldots,e).
\]
All group coordinates are degenerate, so after horizontal
normalization only bar degree zero remains. The Eilenberg--Zilber
comparison therefore identifies this map with
\[
 j_X:C_*(X;\Q)\longrightarrow
 \Tot C_*\bigl(\Gamma;C_*(X;\Q)\bigr).
\]
Naturality follows from the naturality of the bar construction and of
the normalized Eilenberg--Zilber comparison.
\end{proof}


\begin{thebibliography}{99}
\raggedright

\bibitem{BousfieldLocalization}
A.~K. Bousfield,
\emph{The localization of spaces with respect to homology},
Topology \textbf{14} (1975), 133--150.
\href{https://doi.org/10.1016/0040-9383(75)90023-3}
{doi:10.1016/0040-9383(75)90023-3}.

\bibitem{BousfieldTowers}
A.~K. Bousfield,
\emph{Homological localization towers for groups and $\pi$-modules},
Mem. Amer. Math. Soc. \textbf{10} (1977), no.~186.

\bibitem{BousfieldPadic}
A.~K. Bousfield,
\emph{On the $p$-adic completions of nonnilpotent spaces},
Trans. Amer. Math. Soc. \textbf{331} (1992), 335--359.
\href{https://doi.org/10.1090/S0002-9947-1992-1038019-9}
{doi:10.1090/S0002-9947-1992-1038019-9}.

\bibitem{BK}
A.~K. Bousfield and D.~M. Kan,
\emph{Homotopy Limits, Completions and Localizations},
Lecture Notes in Mathematics \textbf{304}, Springer, 1972.

\bibitem{LieModels}
U.~Buijs, Y.~F\'elix, A.~Murillo and D.~Tanr\'e,
\emph{Lie Models in Topology},
Progress in Mathematics \textbf{335}, Birkh\"auser, 2020.
\href{https://doi.org/10.1007/978-3-030-54430-0}
{doi:10.1007/978-3-030-54430-0}.

\bibitem{InfinityQuillen}
U.~Buijs, Y.~F\'elix, A.~Murillo and D.~Tanr\'e,
\emph{The infinity Quillen functor, Maurer--Cartan elements and DGL
realizations}, arXiv:1702.04397v2, 2018.
\url{https://arxiv.org/abs/1702.04397}.

\bibitem{CodensityCompletion}
E.~Dror Farjoun and S.~O. Ivanov,
\emph{Bousfield--Kan completion as a codensity $\infty$-monad},
arXiv:2507.08414v3, 2025.
\url{https://arxiv.org/abs/2507.08414v3}.

\bibitem{Completion}
Y.~F\'elix, M.~Fuentes and A.~Murillo,
\emph{A Lie characterization of the Bousfield--Kan $\Q$-completion and
$\Q$-good spaces}, Trans. Amer. Math. Soc., to appear.
\href{https://doi.org/10.1090/tran/9855}{doi:10.1090/tran/9855};
arXiv:2407.02812v2, 2026.

\bibitem{Realizations}
Y.~F\'elix, M.~Fuentes and A.~Murillo,
\emph{All known realizations of complete Lie algebras coincide},
Algebr. Geom. Topol. \textbf{25} (2025), 1155--1167.
\href{https://doi.org/10.2140/agt.2025.25.1155}
{doi:10.2140/agt.2025.25.1155}.

\bibitem{Getzler}
E.~Getzler,
\emph{Lie theory for nilpotent $L_\infty$-algebras},
Ann. of Math. (2) \textbf{170} (2009), 271--301.
\href{https://doi.org/10.4007/annals.2009.170.271}
{doi:10.4007/annals.2009.170.271}.

\bibitem{IM}
S.~O. Ivanov and R.~Mikhailov,
\emph{A finite $\Q$-bad space},
Geom. Topol. \textbf{23} (2019), 1237--1249.
\href{https://doi.org/10.2140/gt.2019.23.1237}
{doi:10.2140/gt.2019.23.1237}.

\bibitem{Quillen}
D.~Quillen,
\emph{Rational homotopy theory},
Ann. of Math. (2) \textbf{90} (1969), 205--295.
\href{https://doi.org/10.2307/1970725}{doi:10.2307/1970725}.

\end{thebibliography}
\end{document}